\newtheorem{theorem}{Theorem}[section]
\newtheorem{remark}[theorem]{Remark}
\newtheorem{assumption}[theorem]{Assumption}
\newtheorem{lemma}[theorem]{Lemma}
\newtheorem{proposition}[theorem]{Proposition}
\newtheorem{definition}{Definition}[]
\newtheorem{example}{Example}[]
\theoremstyle{plain}
\newcommand{\field}[1]{\mathbb{#1}}
\newcommand{\R}{\field{R}}
\newcommand{\N}{\field{N}}
\renewcommand{\P}{\field{P}}
\DeclareMathOperator*{\argmin}{arg\,min}
\title[Submodular Mean Field Games]{Submodular Mean Field Games:\\ Existence and Approximation of Solutions}
\author[Dianetti]{Jodi Dianetti}
\author[Ferrari]{Giorgio Ferrari}
\author[Fischer]{Markus Fischer}
\author[Nendel]{Max Nendel}
\keywords{}
\address{J.~Dianetti: Center for Mathematical Economics (IMW), Bielefeld University, Universit\"atsstrasse 25, 33615, Bielefeld, Germany}
\email{\href{mailto:jodi.dianetti@uni-bielefeld.de}{jodi.dianetti@uni-bielefeld.de}}
\address{G.\ Ferrari: Center for Mathematical Economics (IMW), Bielefeld University, Universit\"atsstrasse 25, 33615, Bielefeld, Germany}
\email{\href{giorgio.ferrari@uni-bielefeld.de}{giorgio.ferrari@uni-bielefeld.de}}
\address{M.~Fischer: Department of Mathematics, University of Padova, via Trieste 63, 35121, Padova, Italy}
\email{\href{mailto:fischer@math.unipd.it}{fischer@math.unipd.it}}
\address{M.\ Nendel: Center for Mathematical Economics (IMW), Bielefeld University, Universit\"atsstrasse 25, 33615, Bielefeld, Germany}
\email{\href{mailto:max.nendel@uni-bielefeld.de}{max.nendel@uni-bielefeld.de}}
\date{\today}
\numberwithin{equation}{section}
\begin{document}

\begin{abstract}
We study mean field games with scalar It{\^o}-type dynamics and costs that are \emph{submodular} with respect to a suitable order relation on the state and measure space. The submodularity assumption has a number of interesting consequences. Firstly, it allows us to prove existence of solutions via an application of Tarski's fixed point theorem, covering cases with discontinuous dependence on the measure variable. Secondly, it ensures that the set of solutions enjoys a lattice structure: in particular, there exist a minimal and a maximal solution. Thirdly, it guarantees that those two solutions can be obtained through a simple learning procedure based on the iterations of the best-response-map. The mean field game is first defined over ordinary stochastic controls, then extended to relaxed controls. Our approach allows also to treat a class of submodular mean field games with common noise in which the representative player at equilibrium interacts with the (conditional) mean of its state's distribution.

\end{abstract} 
\maketitle
\smallskip
{\textbf{Keywords}}: Mean field games; submodular cost function; complete lattice; first order stochastic dominance; Tarski's fixed point theorem.

\smallskip
{\textbf{AMS subject classification}}: 93E20, 91A15, 06B23, 49J45.


\section{Introduction}
\label{intro}

In this paper, we study a representative class of mean field games with \emph{submodular} costs. Mean field games (\mbox{MFGs} for short), as introduced by Lasry and Lions \cite{LasryLions07} and, independently, by Huang, Malham{\'e} and Caines \cite{HuangMalhameCaines06}, are limit models for non-cooperative symmetric $N$-player games with mean field interaction as the number of players $N$ tends to infinity; see, for instance, \cite{Cardaliaguet13} and the recent two-volume work \cite{CarmonaDelarue18}.

Submodular games were first introduced by Topkis in \cite{Topkis79} in the context of static non-cooperative $N$-player games. They are characterized by costs of the players that have decreasing differences with respect to a partial order induced by a lattice on the set of strategy vectors. Because the notion of submodularity is related to that of substitute goods in Economics, submodular games have received large attention in the economic literature (see \cite{Amir}, \cite{MilgromRoberts90}, among many others). A systematic treatment of submodular games can be found in \cite{Topkis11}, \cite{Vives01}, and in the survey \cite{Amir05}.

The submodularity assumption has been applied to mean field games by Adlaka and Johari in \cite{AdlakhaJohari13} for a class of discrete time games with infinite horizon discounted costs, by Wi\c{e}cek in \cite{Wiecek17} for a class of finite state mean field games with total reward up to a time of first exit, and by Carmona, Delarue, and Lacker in \cite{CarmonaDelarueLacker17} for mean field games of timing (optimal stopping), in order to study dynamic models of bank runs in a continuous time setting. It is also worth noticing that mean field games considered in recent works adressing the problem of non-uniqueness of solutions enjoy a submodular structure (see e.g.~\cite{BardiFischer18}, \cite{Cecchin&DaiPra&Fischer&Pelino19}, \cite{DelarueFT19}), even if the latter is not exploited therein.


Here, we consider a class of finite horizon mean field games with It{\^o}-type dynamics. More specifically, the evolution of the state of the representative player is described by a one-dimensional It{\^o} stochastic differential equation (SDE) with random (not necessarily Markovian) coefficients and controlled drift. The diffusion coefficient, while independent of state and control, is possibly degenerate. Deterministic dynamics are thus included as a special case. The measure variable, which represents the distribution of the continuum of ``other'' players, only appears in the (random, not Markovian) cost coefficients with running costs split into two parts, one depending on the control, the other on the measure. The measure-dependent costs are assumed to be submodular with respect to first order stochastic dominance on measures and the standard order relation on states (cf.\ Assumption~\ref{ass.submodularity} below).

The submodularity assumption has a number of remarkable consequences. It yields, in particular, an alternative way of establishing the existence of solutions and gives rise to a simple learning procedure. Existence of solutions to the mean field game can be obtained through Banach's fixed point theorem if the time horizon is small (cf.\ \cite{HuangMalhameCaines06}). For arbitrary time horizons, a version of the Brouwer-Schauder fixed point theorem, including generalizations to multi-valued maps, can be used; cf.\ \cite{Cardaliaguet13} and \cite{Lacker15}. Under the submodularity assumption, existence of solutions can instead be deduced from Tarski's fixed point theorem \cite{Tarski55}. This allows us to cover systems with coefficients that are possibly discontinuous in the measure variable. Another notable consequence of the submodularity is that the set of all solutions for a given initial distribution enjoys a lattice structure so that there are a minimal solution and a maximal solution with respect to the order relation. The existence of multiple solutions is in fact quite common in mean field games (see \cite{BardiFischer18, DelarueFT19} and the references therein), and the submodularity assumption is compatible with this non-uniqueness of solutions. Notice that, in particular (yet relevant) cases, we can also prove the existence of MFG solutions when the dynamics of the state process depends on the  measure (see Subsection \ref{meanfielddep}). Furthermore, with a slight modification of the set up, our lattice-theoretical approach allows to deal with a class of MFGs with common noise, in which the representative agent faces a mean field interaction through the conditional mean of its state given the common noise (see Subsection \ref{sec:commonnoise}). This class of MFGs have been recently considered in \cite{DelarueFT19} and \cite{tchuendom}, where the authors address the issue of the uniqueness and selection of equilibria in a linear-quadratic setting.

The problem of how to find solutions to a mean field game in a constructive way has been addressed by Cardaliaguet and Hadikhanloo \cite{CardaliaguetHadikhanloo17}. They analyze a learning procedure, similar to what is known as fictitious play (cf.\ \cite{HofbauerSandholm02} and the references therein), where the representative agent, starting from an arbitrary flow of measures, computes a new flow of measures by updating the average over past measure flows according to the best response to that average. For potential mean field games, the authors establish convergence of this kind of fictitious play. A simpler learning procedure consists in directly iterating the best response map, thus computing a new flow of measures as best response to the previous measure flow. Under the submodularity assumption, we show that this procedure converges to a mean field game solution for appropriately chosen initial measure flows, while it needs not converge for potential or other classes of mean field games.

The rest of this paper is organized as follows. In Subsection~\ref{MFGproblem}, we introduce the controlled system dynamics and costs, together with our standing assumptions, and give the definition of a mean field game, where we take ordinary stochastic open-loop controls as admissible strategies. In Subsection~\ref{latticestructure}, we define the order relation on probability measures which is crucial for the submodularity assumption on the cost coefficients of the game. That assumption is stated and discussed in Subsection~\ref{submodcondition}, while Subsection~\ref{bestresponsemap} deals with properties of the best response map. Subsection~\ref{mainresults} contains our main results, namely Theorem~\ref{theorem.existence} on the existence and lattice structure of MFG solutions and Theorem~\ref{theorem.convergence.learning.procedure} on the convergence of the simple learning procedure. In Section~\ref{section.realxed.mean.field.games}, we extend the analysis of Section~\ref{section.submodular.mean.field.game} to submodular mean field games defined over stochastic relaxed controls. This allows to re-obtain the existence and, especially, the convergence result under more general conditions. Section~\ref{concludingremarks} concludes with comments on the linear-quadratic case, systems with multiplicative and mean field dependent dynamics, and mean field games with common noise. Some auxiliary results on first order stochastic dominance are collected in the Appendix \ref{appendA}.\\

{\bf Notation.} Throughout the rest of this paper, given $x,y\in \mathbb{R}$, we set $x \land y := \min\{x,y\}$ and $x\lor y:=\max \{ x,y \}$. Moreover, given a probability space $(\Omega,\mathcal{F},\mathbb{P})$ and a random variable $X \colon\Omega	\to \mathbb{R}$, we use the (not quite standard) notation $\mathbb{P} \circ X$ for the law of $X$ under $\mathbb{P}$, i.e., we set $\mathbb{P} \circ X [E]:=\mathbb{P}[ X\in E]$ for each Borel set $E$ of $\mathbb{R}.$ Finally, for a given $T\in (0,\infty)$ and a stochastic process $X=(X_t)_{t\in [0,T]}$, with a slight abuse of notation, we denote by $\mathbb{P} \circ X $ the flow of measures associated to $X$; that is, we set $\mathbb{P} \circ X := (\mathbb{P}\circ X_t)_{t\in [0,T]}.$


\section{The submodular mean field game} \label{section.submodular.mean.field.game}

In this section we develop our set up for submodular mean field games. This set up allows us to prove the existence of MFG solutions without using a weak formulation or the notion of relaxed controls. Instead, we combine probabilistic arguments together with a lattice-theoretical approach in order to prove the existence and approximation of MFG solutions. 
 
\subsection{The mean field game problem} \label{MFGproblem}

Let $T>0$ be a fixed time horizon and $W=(W_t)_{t\in [0,T]}$ be a Brownian Motion on a complete filtered probability space $\big(\Omega, \mathcal{F}, (\mathcal F_t)_{t\in [0,T]},  \mathbb{P}\big)$. Let $\xi \in L^2(\Omega,\mathcal F_0,\mathbb P)$ and $(\sigma_t)_{t\in [0,T]} \subset [0,\infty)$ be a progressively measurable square integrable stochastic process. Notice that we allow the volatility process to be zero on a progressively measurable set $E \subset [0,T] \times \Omega$ with positive measure, thus leading to a degenerate dynamics. 
For a set of controls $U\subset \mathbb{R}$, define the the set of admissible controls $\mathcal{A}$ as the set of all square integrable progressively measurable processes $\alpha\colon \Omega \times [0,T] \rightarrow U$. For a measurable function $b\colon \Omega\times [0,T]\times \R\times U\to \R$ and an admissible process $\alpha$,  we consider the controlled SDE (SDE($\alpha$), in short)
\begin{equation}
\label{dynamics} 
dX_t= b(t,X_t, \alpha_t)dt +\sigma_t dW_t,\quad \quad t \in [0,T], \quad X_0=\xi.\\
\end{equation}
With no further reference, thoughout this paper we will assume that for each $(x,a)\in \mathbb{R} \times U$ the process $b(\cdot,\cdot,x,a)$ is progressively measurable and that the usual Lipschitz continuity and growth conditions are satisfied; that is, there exists a constant $C_1>0$ such that for each $(\omega,t,a) \in \Omega \times [0,T] \times U$ we have
\begin{align}\label{assumption.on.b}
   & |b(\omega,t,x,a)-b(\omega,t,y,a)| \leq C_1|x-y|, \quad \forall x,y \in \mathbb{R}, \\
   & |b(\omega,t,x,a)| \leq C_1(1+|x|+|a|^2), \quad \forall x\in \mathbb{R}. \notag
\end{align}
Under the standing assumption, by standard SDE theory, for each $\alpha \in \mathcal{A}$ there exists a unique strong solution $X^\alpha:=(X_t^\alpha)_{t\in [0,T]}$ to the controlled SDE($\alpha$) \eqref{dynamics}.

Let $\mathcal{P}(\mathbb{R})$ denote the space of all probability measures on the Borel $\sigma$-algebra $\mathcal B(\R)$, endowed with the classical ($C_b$-)weak topology,~i.e.\ the topology induced by the weak convergence of probability measures. The costs of the problem are given by three measurable functions 
\begin{align}\label{assumption.costs}
f:&\,\Omega \times [0,T] \times \mathbb{R} \times \mathcal{P}(\mathbb{R})\rightarrow \mathbb{R},  \notag \\
l:&\,\Omega \times [0,T] \times \mathbb{R} \times U\rightarrow \mathbb{R}, \\
g:&\,\Omega \times \mathbb{R} \times \mathcal{P}(\mathbb{R})\rightarrow \mathbb{R}, \notag
\end{align}
such that, for each $(x,\mu,a) \in \mathbb{R} \times \mathcal{P}(\mathbb{R}) \times U$, the processes $f(\cdot,\cdot,x,\mu), \ l(\cdot,\cdot,x,a)$ are progressively measurable and the random variable $g(\cdot,x,\mu)$ is $\mathcal{F}_T$-measurable. We underline that the cost processes $f$ and $g$ are not necessarily Markovian.

For any given and fixed measurable flow $\mu=\left( \mu_t \right)_{t \in [0,T]}$ of probability measures on $\mathcal B(\mathbb{R})$, we
introduce the cost functional
\begin{equation}
\label{cost.functional}
 J(\alpha,\mu):= \mathbb{E} \left[ \int_0^T{\Big[f(t,X_t^{\alpha}, \mu_t)+l(t,X_t^{\alpha}, \alpha_t)\Big] dt} + g(X_T^{\alpha}, \mu_T) \right] , \quad \alpha \in \mathcal{A},
\end{equation}
and consider the optimal control problem $\inf_{\alpha \in \mathcal{A}} J(\alpha, \mu)$.

 We say that $(X^{\mu},\alpha^{\mu})$ is an \emph{optimal pair} for the flow $\mu$ if $-\infty < J(\alpha^{\mu}, \mu) \leq J(\alpha, \mu) $ for each admissible $\alpha\in \mathcal A$ and $X^\mu=X^{\alpha^\mu}$.

\begin{remark}
The subsequent results of this paper remain valid if we consider a geometric dynamics for $X$. Moreover, for suitable choices of the costs, we can also allow for geometric or mean-reverting state processes with dependence on the measure in the dynamics (see subsections \ref{geosec} and \ref{meanfielddep} for more details).
\end{remark}

We make the following standing assumption.

\begin{assumption}\label{ass.main}\
\begin{enumerate} 
\item\label{ass.exist.minima}
For each measurable flow $ \mu $  of probability measures on $\mathcal B(\mathbb R) $, there exists a unique (up to indistinguishability) optimal pair $(X^{\mu},\alpha^{\mu})$.
\item There exists a continuous and strictly increasing function $\psi\colon [0,\infty)\to [0,\infty)$ with $\lim_{s\to \infty}\psi(s)=\infty$ and a constant $M> \psi(0)$ such that
\begin{equation}\label{integrabilityassumption}
	\mathbb E\big[\psi\big(|X_t^{\mu}|\big)\big] \leq M \quad \text{for all measurable flows of probabilities } \mu \text{ and }t \in [0,T].
\end{equation}
\end{enumerate}
\end{assumption}

\begin{remark} 
To underline the flexibility of our set up, Condition (1) in Assumption \ref{ass.main} is stated at an informal level. 
Condition (1) holds, for example, in the case of a linear-convex setting in which $b(t,x,a)=c_t + p_t x + q_t a$, for suitable processes $c$, $p$, $q$, $l(t,\cdot,\cdot)$ is strictly convex and lower semicontinuous, $f(t,\cdot,\mu)$ and $g(\cdot, \mu)$ are lower semicontinuous, and $U$ is convex and compact. More general conditions ensuring existence and uniqueness of an optimal pair in the strong formulation of the control problem can be found in \cite{FuhrmanTessitore04} and in Chapter~II of \cite{Carmona16}, among others. 
\end{remark} 
    
\begin{remark}\label{remark.ass.tightness.minimizers} 
Notice that Condition (2) in Assumption \ref{ass.main} is equivalent to the tightness of the family of laws $\big\{\mathbb{P} \circ X_t^\mu : \mu \text{ is a measurable flow, }\; t\in [0,T]\big\}$ (cf. \cite{chandra}, \cite{leskela} or \cite{nendel}).
The latter is satisfied, for example, if $U$ is compact or if $b$ is bounded in $a$. Alternatively, one can assume that $U$ is closed and that there exist exponents $p'>p\geq 1$ and constants $\kappa,\, K >0$ such that $\mathbb{E}[|\xi|^{p'}] < \infty$ and
\begin{align}\label{ass.alternativegrowth}
&|g(x,\mu)| \leq K(1+|x|^p) ,\\
&\kappa |a|^{p'} - K(1+|x|^p) \leq f(t,x,\mu) + l(t,x,a) \leq K(1+|x|^p+|a|^p), \notag
\end{align} 
for all $ (t,x,\mu,a) \in [0,T] \times \mathbb{R} \times \mathcal{P}(\mathbb{R}) \times U $. Indeed, following the proof of Lemma 5.1 in \cite{Lacker15}, these conditions allow to have an a priori bound on the $p$-moments of the minimizers independent of the measure $\mu$.
\end{remark}
\begin{remark}
Differently from the standard conditions in the literature on mean field games, our existence result (Theorem \ref{theorem.existence}) does not require any continuity of the costs $f$ and $g$ in the measure $\mu$. 
\end{remark}

For each measurable flow $\mu$ of probability measures on $\mathcal B(\R)$, we now define the \emph{best-response} by $R(\mu):=\mathbb{P} \circ X^\mu$, where we set $\mathbb{P} \circ X^\mu := \big( \mathbb{P} \circ X_t^\mu \big)_{t\in [0,T]}$. The map $\mu\mapsto R(\mu)$ is called the \emph{best-response-map}.
\begin{definition}[MFG Solution]
A measurable flow $\mu^*$ of probability measures on $\mathcal B(\R)$ is a mean field game solution if it is a fixed point of the best-response-map $R$; that is, if $R(\mu^*)=\mu^*$.
\end{definition}

\subsection{The lattice structure} \label{latticestructure}

In this section, we endow the space of measurable flows with a suitable lattice structure, which is fundamental for the subsequent analysis. We start by identifying the set of probability measures $\mathcal{P}(\mathbb{R})$ by the set of distribution functions on $\mathbb{R}$, setting $\mu(s):=\mu(-\infty,s]$ for each $s\in \mathbb{R}$ and $\mu\in \mathcal P(\R)$. On $\mathcal{P}(\mathbb{R})$ we then consider the order relation $\leq^{\text{st}}$ given by the \emph{first order stochastic dominance}, i.e.\ we write
\begin{equation}\label{fistorderstochdom}
 \mu \leq^{\text{st}} \nu  \text{ for }\mu, \nu \in \mathcal{P}(\mathbb{R})\text{ if and only if }\mu(s) \geq \nu(s)\text{ for each }s \in \mathbb{R}.
\end{equation}
 The partially ordered set $(\mathcal{P}(\mathbb{R}),\leq^{\text{st}})$ is then endowed with a lattice structure by defining  
\begin{equation}\label{fistorderstochdomminmax}
(\mu \land^{\text{st}} \nu) (s):=\mu(s) \lor \nu (s) \quad \text{and} \quad 
(\mu \lor^{\text{st}} \nu) (s):=\mu(s) \land \nu (s) \quad \text{for each } s\in \mathbb{R}.
\end{equation}
Observe that (see e.g.\ \cite{ShakedShanthikumar07}), for $\mu,\nu \in \mathcal{P}(\mathbb{R})$, we have 
\begin{equation}\label{charact.stoch.dominance}
\mu \leq^{\text{st}} \nu \text{ if and only if }\left\langle \varphi, \mu \right\rangle \leq \left\langle \varphi, \nu \right\rangle
\end{equation}
for each increasing function $\varphi\colon\mathbb{R}\rightarrow \mathbb{R}$ such that $\left\langle \varphi, \mu \right\rangle$ and $\left\langle \varphi, \nu \right\rangle$ are finite,  where $\left\langle \varphi, \mu \right\rangle:=\int_\mathbb{R} \varphi(y)d\mu(y)$.

Recall that by \eqref{integrabilityassumption},
\[
 \mathbb E\big[\psi\big(|X_t^{\mu}|\big)\big] \leq M \quad \text{for all measurable flows $\mu$ and }t \in [0,T].
\]
Then, by Lemma \ref{equivtight}, there exist $\mu^{\rm Min},\, \mu^{\rm Max}\in \mathcal P(\R)$ with
$$\mu^{\rm Min} \leq^{\text{st}} \mathbb{P} \circ X_t^{\mu} \leq^{\text{st}} \mu^{\rm Max} \quad  \text{for all measurable flows $\mu$ and }t \in [0,T].$$
This observation suggests to consider the interval
$$[\mu^{\rm Min},\mu^{\rm Max}]=\big\{ \mu \in \mathcal{P}(\mathbb{R}) \, | \, \mu^{\rm Min}\leq^{\text{st}} \mu \leq^{\text{st}} \mu^{\rm Max} \big\}$$
endowed with the Borel $\sigma$-algebra induced by the weak topology, i.e.\ the topology related to the weak convergence of probability measures. We consider the finite measure $\pi:= \delta_0+dt+\delta_T$ on the Borel $\sigma$-algebra $\mathcal{B}([0,T])$ of the interval $[0,T]$, where $\delta_t$ denotes the Dirac measure at time $t\in [0,T]$. Notice that we include $\delta_0$ into the definition of the measure $\pi$ in order to prescribe the initial law $\mathbb{P} \circ \xi$. We then define the set $L$ of feasible flows of measures as the set of all equivalence classes (w.r.t.\ $\pi$) of measurable flows $(\mu_t)_{t\in [0,T]}$ with $\mu_t\in [\mu^{\rm Min},\mu^{\rm Max}]$ for $\pi$-almost all $t\in (0,T]$ and $\mu_0=\mathbb{P} \circ \xi$. On $L$ we consider the order relation $\leq^{\text{\tiny{$L$}}}$ given by $\mu \leq^{\text{\tiny{$L$}}} \nu$ if and only if $\mu_t \leq^{\text{st}} \nu_t$ for $\pi$-a.a.\ $t\in [0,T]$. This order relation implies that $L$ can be endowed with the lattice structure given by
$$
(\mu \land^{\text{\tiny{$L$}}} \nu)_t:=\mu_t \land^{\text{st}} \nu_t \quad \text{and} \quad 
(\mu \lor^{\text{\tiny{$L$}}} \nu)_t:=\mu_t \lor^{\text{st}} \nu_t \quad \text{for } \pi\text{-a.a. } t  \in [0,T].
$$ 
Notice that $\big(\mathbb{P} \circ X_t^{\alpha} \big)_{t \in [0,T]} \in L$ for every $\alpha\in \mathcal A$. In particular, the best-response-map $R\colon L \rightarrow L$ is well defined. 
\begin{remark}\label{role.p}
We point out that if $\psi(x)=x^2$, then each element of $[\mu^{\rm Min},\mu^{\rm Max}]$ has finite first-order moment, i.e. $ \int_\R |y| d\mu(y) < \infty$ for each $ [\mu^{\rm Min},\mu^{\rm Max}]$. This follows directly from Lemma \ref{remintervalui}.  Notice also that a higher integrability requirement in \eqref{integrabilityassumption} implies the existence and uniform boundedness of higher moments for the elements of $[\mu^{\rm Min},\mu^{\rm Max}]$. More precisely, if $\psi(x)=x^{p'}$ for some $p' \in (1,\infty)$, then $$\sup_{\mu\in [\mu^{\rm Min},\mu^{\rm Max}]}\int_\R |y|^{p} d\mu(y) < \infty\quad \text{for all }p\in (1,p').$$
\end{remark}

We now turn our focus on the main result of this subsection, which is the following lemma.
Its proof follows from the more general Proposition \ref{Append.Completeness}, which is relegated to the Appendix \ref{appendA}. 
\begin{lemma}\label{L.complete}
The lattice $(L,\leq^{\text{\tiny{$L$}}})$ is complete. That is, each subset of $L$ has a least upper bound and a greatest lower bound.
\end{lemma}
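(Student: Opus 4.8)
The plan is to deduce completeness of $(L,\leq^{\text{\tiny{$L$}}})$ from two ingredients: completeness of the single-time ``fibre'' $([\mu^{\rm Min},\mu^{\rm Max}],\leq^{\text{st}})$, and a lifting argument to $\pi$-equivalence classes of measurable flows that is based on essential suprema. The whole difficulty is that a subset $\mathcal S\subseteq L$ may be uncountable, whereas the order is imposed only $\pi$-almost everywhere, so a naive pointwise-in-$t$ supremum over $\mathcal S$ need not even be measurable.

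First I would establish that $([\mu^{\rm Min},\mu^{\rm Max}],\leq^{\text{st}})$ is itself a complete lattice. Identifying each measure with its distribution function and recalling that $\mu\leq^{\text{st}}\nu$ iff $\mu(s)\geq\nu(s)$ for all $s$, the key observation is that, by right-continuity of distribution functions, this is equivalent to $\mu(q)\geq\nu(q)$ for all $q\in\Q$; hence the countable set $\Q$ is order-determining. Given a family $(\mu^i)_{i\in I}$ in the interval, the pointwise infimum $s\mapsto\inf_i\mu^i(s)$ is nondecreasing and, as one checks from the identity $\inf_i\mu^i(s^+)=\inf_i\mu^i(s)$, again right-continuous; being squeezed between $\mu^{\rm Max}(\cdot)$ and $\mu^{\rm Min}(\cdot)$ it has the correct limits $0$ and $1$, so it is the distribution function of a measure in the interval, which is readily verified to be the join $\bigvee^{\text{st}}_i\mu^i$ (the supremum in $\leq^{\text{st}}$). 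Dually, the greatest lower bound is the measure whose distribution function is the right-continuous modification $s\mapsto\inf_{r>s}\sup_i\mu^i(r)$ of the pointwise supremum. The top and bottom elements are $\mu^{\rm Max}$ and $\mu^{\rm Min}$.

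Next, for $\mathcal S\subseteq L$ I would construct $\sup\mathcal S$ as follows. For each $q\in\Q$ the family $\{t\mapsto\mu_t(q):\mu\in\mathcal S\}$ consists of measurable $[0,1]$-valued functions, so its essential infimum with respect to $\pi$ exists and, by the classical theorem on essential infima, is attained along a countable subfamily $\mathcal S_q\subseteq\mathcal S$. Taking $\mathcal S_0:=\bigcup_{q\in\Q}\mathcal S_q=:\{\mu^k\}_{k\in\N}$, which is still countable, yields a single countable family with $\inf_k\mu^k_t(q)=\essinf_{\mu\in\mathcal S}\mu_t(q)$ for $\pi$-a.a.\ $t$ and every $q\in\Q$. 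I then set $\bar\mu_t:=\bigvee^{\text{st}}_k\mu^k_t$ fibrewise; by the first step $\bar\mu_t(s)=\inf_k\mu^k_t(s)$ is, for each $t$, a distribution function of a measure in $[\mu^{\rm Min},\mu^{\rm Max}]$, and measurability of $t\mapsto\bar\mu_t$ (hence $\bar\mu\in L$, with $\bar\mu_0=\P\circ\xi$ since every $\mu^k_0=\P\circ\xi$) follows because each section $t\mapsto\bar\mu_t(s)$ is a countable infimum of the measurable maps $t\mapsto\mu^k_t(s)$.

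Finally I would verify $\bar\mu=\sup_{\leq^{\text{\tiny{$L$}}}}\mathcal S$. That $\bar\mu$ is an upper bound: for fixed $\mu\in\mathcal S$ and each $q\in\Q$, $\bar\mu_t(q)\leq\mu_t(q)$ for $\pi$-a.a.\ $t$ by definition of the essential infimum; intersecting over the countably many $q$ and using right-continuity in $s$ gives $\bar\mu_t(s)\leq\mu_t(s)$ for all $s$ and $\pi$-a.a.\ $t$, i.e.\ $\mu\leq^{\text{\tiny{$L$}}}\bar\mu$. That it is least: any $\nu\in L$ dominating $\mathcal S$ dominates in particular every $\mu^k$, so $\nu_t(s)\leq\mu^k_t(s)$ for all $k,s$ and $\pi$-a.a.\ $t$, whence $\nu_t(s)\leq\inf_k\mu^k_t(s)=\bar\mu_t(s)$, i.e.\ $\bar\mu\leq^{\text{\tiny{$L$}}}\nu$. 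The greatest lower bound is obtained dually, replacing essential infima by essential suprema of $t\mapsto\mu_t(q)$ and using the right-continuous modification from the fibre step. The main obstacle, as noted, is reconciling possible uncountability of $\mathcal S$ with the $\pi$-a.e.\ order; the essential-supremum reduction to a countable subfamily, made compatible across all coordinates $q\in\Q$ simultaneously, is exactly what resolves it, while the secondary points to watch are measurability in $t$ and invariance of the interval $[\mu^{\rm Min},\mu^{\rm Max}]$ under the fibrewise operations (in particular that the right-continuous modification used for infima preserves measurability).
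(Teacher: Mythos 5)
Your proof is correct, and it reaches the conclusion by a genuinely different mechanism than the paper. The paper (Proposition \ref{Append.Completeness}) also reduces an arbitrary subset to a countable one, but it does so directly at the level of measure-valued flows: it maximizes the scalar functional $\Psi\mapsto\int_S\int_\R\arctan(x)\,d\mu_t^\Psi(x)\,d\pi(t)$ over countable subsets $\Psi\subset M$ (countable suprema existing fibrewise as in your first step), and then uses strict monotonicity of $\arctan$ with respect to $\leq^{\rm st}$ to show that the supremum of the maximizing countable union already dominates every element of $M$; in effect it re-proves the essential-supremum theorem in the ordered space $(L,\leq^{\text{\tiny{$L$}}})$ in one stroke. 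You instead invoke the classical essential-supremum/infimum theorem for real-valued measurable functions as a black box, applied coordinate-wise to the evaluations $t\mapsto\mu_t(q)$ at rational points, and then glue the countable subfamilies across $q\in\Q$ before performing the fibrewise lattice operation. What your route buys is the use of an off-the-shelf scalar result and a very explicit description of the supremum's distribution functions; the cost is the extra bookkeeping you correctly identified (compatibility across the countably many coordinates, right-continuity versus the usc-envelope for the infimum, and measurability of the modified envelope). The paper's route buys a shorter, self-contained argument with no coordinate-wise gluing, and its structure immediately yields the monotone-approximation statement (Remark \ref{append.remark.increasing.sequence.weakly.converges}) that the convergence results later rely on --- a byproduct your construction also delivers, since your $\bar\mu$ is by construction the supremum of a countable subfamily, but which you would still need to state separately.
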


\subsection{The submodularity condition} \label{submodcondition}

Our subsequent results rely on the following key assumption.
\begin{assumption}[Submodularity condition]
\label{ass.submodularity} 
For  $\mathbb{P}\otimes dt$ a.a.\ $(\omega,t) \in \Omega \times [0,T]$, the functions $f(t,\cdot, \cdot)$ and $g$  have decreasing differences in $(x,\mu)$; that is, for $\phi \in \{f(t,\cdot, \cdot), g \}$,
\begin{equation*}
 \phi(\bar{x},\bar{\mu})- \phi(x,\bar{\mu}) \leq \phi(\bar{x},\mu)-  \phi(x,\mu ), 
\end{equation*}
for all $ \bar{x},x \in \mathbb{R}$ and $ \bar{\mu},\mu \in \mathcal{P}(\mathbb{R})$ s.t. $\bar{x} \geq x$ and $\bar{\mu} \geq^{\text{st}} \mu$.
\end{assumption}

We list here three examples in which Assumption \ref{ass.submodularity} is satisfied.
\begin{example}
Assumption \ref{ass.submodularity} is always fulfilled for additively separable functions, i.e.\ when $\phi(x,\mu)=\phi_1(x) + \phi_2(\mu)$. 
\end{example} 

\begin{example}[Mean-field interaction of scalar type]\label{example.scalar.type}
Consider a mean-field interaction of scalar type; that is, $\phi(x,\mu)=\gamma(x, 	\left\langle \varphi, \mu \right\rangle )$ for given measurable maps $\gamma:\mathbb{R}^2\rightarrow \mathbb{R}$ and  $\varphi:\mathbb{R}\rightarrow\mathbb{R}$.
If the map $\varphi$ is increasing and the map $\gamma:\mathbb{R}^2 \rightarrow\mathbb{R}$ has decreasing differences in $(x,y) \in \mathbb{R}^2$, then Assumption \ref{ass.submodularity} is satisfied.
Observe that a function $\gamma \in \mathcal{C}^2(\mathbb{R}^2)$ has decreasing differences in $(x,y)$ if and only if $$\frac{\partial^2 \gamma}{ \partial x \partial y}(x,y) \leq 0 \quad \text{for each} \quad (x,y) \in \mathbb{R}^2. $$
\end{example}
\begin{example}[Mean-field interactions of order-1]
Another example is provided by the interactions of order-1, i.e.\ when $\phi$ is of the form
$$
\phi(x,\mu)=\int_\mathbb{R} \gamma(x,y) d\mu(y).
$$
It is easy to check that, thanks to (\ref{charact.stoch.dominance}), Assumption \ref{ass.submodularity} holds when $\gamma$ has decreasing differences in $(x,y)$.
\end{example}
 
A natural and relevant question related to Assumption \ref{ass.submodularity} concerns its link to the so-called \emph{Lasry-Lions monotonity condition}, i.e.\ the condition
\begin{equation}
\label{ass.monotonicity.Lasry.Lion}
    \int_\mathbb{R} (\phi(x, \bar{\mu}) - \phi(x, {\mu}))d(\bar{\mu}-\mu)(x) \geq 0, \quad \forall \, \bar{\mu},\mu \in \mathcal{P}(\mathbb{R}).
\end{equation}
In general, there is no relation between the submodularity condition and \eqref{ass.monotonicity.Lasry.Lion}. However, since Assumption \ref{ass.submodularity} is equivalent to the fact that the map $\phi(\cdot, \bar\mu)-\phi(\cdot, \mu)$ is decreasing for $\mu, \bar\mu\in \mathcal\P(\R)$ with $\bar\mu\geq^{\text{st}}\mu$, Assumption \ref{ass.submodularity} and \eqref{charact.stoch.dominance} imply that
\begin{equation*}
    \int_\mathbb{R} (\phi(x, \bar{\mu}) - \phi(x, {\mu}))d(\bar{\mu}-\mu)(x) \leq 0, \quad \forall \, \bar{\mu},\mu \in \mathcal{P}(\mathbb{R}) \text{ with } \bar\mu\geq^{\text{st}}\mu;
\end{equation*}
the latter, roughly speaking, being sort of an opposite version of the Lasry-Lions monotonicity condition \eqref{ass.monotonicity.Lasry.Lion}.

\begin{remark}
Specific cost functions satisfying Assumption \ref{ass.submodularity} are, for example, 
$$f(t,x,\mu)\equiv 0, \quad l(t,x,a) = \frac{a^2}{2}, \quad g(x,\mu) = \big(x - \mathds{1}_{[0,\infty)}(\langle \text{id}, \mu \rangle)\big)^2,$$
where $\text{id}(y)=y$. Notice that the function $\mu \mapsto g(x,\mu)$ is discontinuous, in contrast to the typical continuity requirement assumed in the literature (see, e.g., \cite{Lacker15}). However, in this specific case, Assumption \ref{ass.main} is only satisfied if the control set $U$ is compact. 
\end{remark}


\subsection{The best-response-map} \label{bestresponsemap}

In the following lemma, we show that the set of admissible trajectories is a lattice.
\begin{lemma}\label{lemma.trajectories.lattice} 
If $\alpha$ and $\bar{\alpha}$ are admissible controls, then there exists an admissible control $\alpha^{\vee}$ such that $X^\alpha \lor X^{\bar{\alpha}}=X^{\alpha^{\vee}}$. Moreover, there exists an admissible control $\alpha^\wedge$ such that $X^\alpha \land X^{\bar{\alpha}}=X^{\alpha^\wedge}$.
\end{lemma}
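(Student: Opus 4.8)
The plan is to exploit the fact that the two controlled SDEs share the same, control- and state-independent, diffusion coefficient $\sigma_t\,dW_t$. Consequently the difference $Y_t := X_t^\alpha - X_t^{\bar\alpha}$ has vanishing martingale part: it is the absolutely continuous process
\begin{equation*}
Y_t = \int_0^t \big[ b(s,X_s^\alpha,\alpha_s) - b(s,X_s^{\bar\alpha},\bar\alpha_s) \big]\,ds, \qquad Y_0 = 0,
\end{equation*}
so that $\langle Y\rangle \equiv 0$ and the local time of $Y$ at $0$ vanishes identically. I would then construct $\alpha^\vee$ by switching between the two controls according to which trajectory is currently on top, namely
\begin{equation*}
\alpha_t^\vee := \alpha_t\,\mathbf{1}_{\{X_t^\alpha > X_t^{\bar\alpha}\}} + \bar\alpha_t\,\mathbf{1}_{\{X_t^\alpha \le X_t^{\bar\alpha}\}}.
\end{equation*}
Since $X^\alpha, X^{\bar\alpha}$ are continuous and progressively measurable and $\alpha,\bar\alpha\in\mathcal{A}$, the process $\alpha^\vee$ is progressively measurable, $U$-valued and square integrable, hence $\alpha^\vee\in\mathcal{A}$.

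First I would compute the dynamics of $Z_t := X_t^\alpha \lor X_t^{\bar\alpha} = X_t^{\bar\alpha} + Y_t^+$. Applying Tanaka's formula to $Y^+$ and discarding the (vanishing) local time term gives $dY_t^+ = \mathbf{1}_{\{Y_t>0\}}\,dY_t$; equivalently, since $Y$ is absolutely continuous its time derivative vanishes for a.e.\ $t$ on the level set $\{Y_t=0\}$, so that the classical chain rule for the Lipschitz map $x\mapsto x^+$ yields the same identity and the behaviour on $\{Y_t=0\}$ is immaterial. Substituting and regrouping the drift terms I obtain
\begin{equation*}
Z_t = \xi + \int_0^t \Big[ \mathbf{1}_{\{X_s^\alpha > X_s^{\bar\alpha}\}}\, b(s,X_s^\alpha,\alpha_s) + \mathbf{1}_{\{X_s^\alpha \le X_s^{\bar\alpha}\}}\, b(s,X_s^{\bar\alpha},\bar\alpha_s) \Big]\,ds + \int_0^t \sigma_s\,dW_s.
\end{equation*}
On $\{X_s^\alpha > X_s^{\bar\alpha}\}$ one has $Z_s = X_s^\alpha$ and $\alpha_s^\vee = \alpha_s$, while on the complement $Z_s = X_s^{\bar\alpha}$ and $\alpha_s^\vee = \bar\alpha_s$; hence the bracket equals $b(s,Z_s,\alpha_s^\vee)$ pathwise, and $Z$ solves SDE($\alpha^\vee$). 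By the uniqueness of strong solutions recalled after \eqref{assumption.on.b}, $Z = X^{\alpha^\vee}$, which is the first claim.

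For the minimum I would argue symmetrically, writing $X_t^\alpha \land X_t^{\bar\alpha} = X_t^{\bar\alpha} - (X_t^{\bar\alpha}-X_t^\alpha)^+$ and setting
\begin{equation*}
\alpha_t^\wedge := \alpha_t\,\mathbf{1}_{\{X_t^\alpha < X_t^{\bar\alpha}\}} + \bar\alpha_t\,\mathbf{1}_{\{X_t^\alpha \ge X_t^{\bar\alpha}\}};
\end{equation*}
the same Tanaka computation shows that $X^\alpha \land X^{\bar\alpha}$ solves SDE($\alpha^\wedge$), and uniqueness again identifies it with $X^{\alpha^\wedge}$. The only delicate point is the treatment of the boundary set $\{Y_t=0\}$ when differentiating the positive part: the argument hinges precisely on the degeneracy-free structure of the difference $Y$ (no Brownian component, because $\sigma$ depends neither on state nor on control), which annihilates the local time and makes the naive chain rule rigorous irrespective of the convention chosen at $Y_t=0$.
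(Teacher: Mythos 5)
Your proof is correct, and it rests on exactly the same structural pillars as the paper's: the diffusion term is common to both equations, so the difference $Y=X^\alpha-X^{\bar\alpha}$ is pathwise absolutely continuous, one builds a switching control, and strong uniqueness (guaranteed by the Lipschitz assumption \eqref{assumption.on.b}) identifies the max/min with the controlled solution. Where you genuinely diverge is in the verification step. The paper writes $X^\alpha\lor X^{\bar\alpha}$ as the common stochastic part plus $A_t\lor\bar A_t$ (the max of the two drift integrals) and differentiates this max pathwise by a case analysis; this forces a drift-based tie-breaking rule in the definition of $\alpha^\vee$ on the set $\{X_s^\alpha=X_s^{\bar\alpha}\}$ and a somewhat delicate argument at tie points (the sequence $h^j$ in the paper's proof). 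You instead use the decomposition $X^\alpha\lor X^{\bar\alpha}=X^{\bar\alpha}+Y^+$ together with the chain rule for a Lipschitz function of an absolutely continuous path (equivalently, Tanaka's formula with vanishing local time, since $\langle Y\rangle\equiv 0$). Because $Y'=0$ a.e.\ on the level set $\{Y_t=0\}$, the boundary behaviour is automatically immaterial, so your $\alpha^\vee$ can break ties arbitrarily (you always take $\bar\alpha$ there) and no case analysis is needed — on $\{Y_t\le 0\}$ you have $Z_t=X_t^{\bar\alpha}$ and $\alpha_t^\vee=\bar\alpha_t$, so the drift identity is trivial there. The net effect: your argument is shorter and cleaner, at the price of invoking the (standard but nontrivial) a.e.\ chain rule / Tanaka machinery, whereas the paper's argument is more elementary and entirely self-contained, at the price of the heavier bookkeeping and the more intricate definition of the switching control.
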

\begin{proof}
Let $\alpha$ and $\bar{\alpha}$ be admissible controls and define the process $\alpha^{\vee}$ by
$$
\alpha_s^{\vee} := 
\begin{cases}
\alpha_s
& \text{on} \quad \{ X_s^{\alpha} > X_s^{\bar{\alpha}} \} \cup \{ X_s^{\alpha} = X_s^{\bar{\alpha}}, \  b(s,X_s^{\alpha},\alpha_s) \geq b(s,X_s^{\bar{\alpha}},\bar{\alpha}_s) \},\\  
\bar{\alpha}_s 
& \text{on} \quad \{ X_s^{\alpha} < X_s^{\bar{\alpha}} \} \cup \{ X_s^{\alpha} = X_s^{\bar{\alpha}}, \   b(s,X_s^{\alpha},\alpha_s) < b(s,X_s^{\bar{\alpha}},\bar{\alpha}_s) \}.\\ 
\end{cases}
$$
The process $\alpha^{\vee}$ is clearly progressively measurable and square integrable, hence admissible.

We want to show that  $X^\alpha \lor X^{\bar{\alpha}}=X^{\alpha^{\vee}};$  that is,
\begin{equation}\label{SDE.sup}
X_t^\alpha \lor X_t^{\bar{\alpha}} = x_0 + \int_0^t b(s, X_s^\alpha \lor X_s^{\bar{\alpha}} , \alpha_s^{\vee} )ds + \int_0^t \sigma_s dW_s, \quad \forall t \in [0,T], \quad \mathbb{P}\text{-a.s.}
\end{equation}
In order to do so, observe that the process $X^\alpha \lor X^{\bar{\alpha}}$ satisfies, $\mathbb{P}\text{-a.s.}$  for each $t \in [0,T]$, the following integral equation 
\begin{equation}\label{integral.equation.sup}
 X_t^\alpha \lor X_t^{\bar{\alpha}} = x_0 + \int_0^t \sigma_s  dW_s + \bigg( \int_0^t b(s,X_s^{\alpha},\alpha_s) ds \bigg) \lor \bigg(\int_0^t b(s,X_s^{\bar{\alpha}},\bar{\alpha}_s) ds \bigg).
\end{equation}
Furthermore, defining the two processes $A$ and $\bar{A}$ by
$$
A_t := \int_0^t b(s,X_s^{\alpha},\alpha_s) ds \quad \text{and} \quad \bar{A}_t := \int_0^t b(s,X_s^{\bar{\alpha}},\bar{\alpha}_s) ds, 
$$
we see that the process $S$, defined by
$ S_t:=A_t \lor \bar{A}_t$,
is $\mathbb{P}$-a.s.\ absolutely continuous. Hence the time derivative of $S$ exists a.e.\ in $[0,T]$ and, in view of (\ref{integral.equation.sup}), in order to prove (\ref{SDE.sup}) it sufficies to show that $dS_t/dt=b(t, X_t^\alpha \lor X_t^{\bar{\alpha}}, \alpha_t^{\vee})$ for $\mathbb{P} \otimes dt $ a.a.\ $(\omega,t) \in \Omega \times [0,T]$.

Since the processes $A$, $\bar{A}$ and $S$ are $\mathbb{P}$-a.s.\ absolutely continuous, for each $\omega$ in a set of full probability, the paths $A(\omega)$, $\bar{A}(\omega)$ and $S(\omega)$ admit time derivatives in a subset $E(\omega) \subset [0,T]$ with full Lebesgue measure.
We now use a pathwise argument, without stressing the dependence on $\omega \in \Omega$. Take $t \in E$ such that $X_t^{\alpha} > X_t^{\bar{\alpha}}$. 
By continuity, there exists a (random) neighborhood $I_t$ of $t$ in $\mathbb{R}$ such that $X_s^{\alpha} > X_s^{\bar{\alpha}}$ for each $s \in I_t \cap [0,T]$, which, by (\ref{integral.equation.sup}), is true if and only if $A_s > \bar{A}_s$ for each $s \in I_t \cap [0,T]$.
Hence, by definition of $S$, we have
$$ 
\frac{dS_s}{ds} =\frac{dA_s}{ds}= b(s,X_s^{\alpha},{\alpha}_s), \quad \forall \,s \in I_t \cap [0,T] , 
$$
and, in particular, $dS_s/ds = b(s, X_s^\alpha \lor X_s^{\bar{\alpha}}, \alpha_s^{\vee})$ for each $s \in I_t \cap [0,T]$. 

Take now $t \in E$ such that $ X_t^{\alpha} = X_t^{\bar{\alpha}}$ and $b(t,X_t^{\alpha},\alpha_t) \geq b(t,X_t^{\bar{\alpha}},\bar{\alpha}_t)$. From  (\ref{integral.equation.sup}) it follows that $A_t = \bar{A}_t$, which in turn implies that 
$$
\frac{dS_t}{dt}=\lim_{h \to 0} \frac{A_{t+h}\lor \bar{A}_{t+h} - A_t\lor \bar{A}_t}{h} \geq \frac{dA_t}{dt} \lor \frac{d \bar{A}_t}{dt}.
$$
In particular,
\begin{equation} \label{deriv}
	\frac{dA_t}{dt} =b(t,X_t^{\alpha},\alpha_t) \geq b(t,X_t^{\bar{\alpha}},\bar{\alpha}_t)= \frac{d\bar{A}_t}{dt}.
\end{equation} 
If there exists a sequence $\{ h^j \}_{j \in \mathbb{N}}$ converging to $0$ such that $A_{t+h^j} \geq \bar{A}_{t+h^j}$ for each $j\in \mathbb{N}$, then clearly $dS_t/dt = dA_t/dt = b(t,X_t^{\alpha},\alpha_t) = b(t, X_t^{\alpha} \lor X_t^{\bar{\alpha}}, \alpha_t^{\vee}) $, as desired. 
On the other hand, if such a sequence does not exist, then there exists some $\delta >0$ such that  $A_{t+h} \leq \bar{A}_{t+h}$ for each $h\in (-\delta,\delta)$. Recalling (\ref{deriv}), this implies that $d{A}_t/dt \leq dS_t/dt = d\bar{A}_t/dt \leq d{A}_t/dt $, hence we obtain again that $dS_t/dt =  d{A}_t/dt $. 

Altogether, we have proved that for a.a.\ $t \in [0,T]$ with $X_t^{\alpha} > X_t^{\bar{\alpha}}$ or $ X_t^{\alpha} = X_t^{\bar{\alpha}}$ and $b(t,X_t^{\alpha},\alpha_t) \geq b(t,X_t^{\bar{\alpha}},\bar{\alpha}_t)$, we have $dS_t/dt= b(t,X_t^{\alpha},\alpha_t) = b(t, X_t^{\alpha} \lor X_t^{\bar{\alpha}}, \alpha_t^{\vee}) $. 
Analogously, one can prove that $dS_t/dt =b(t,X_t^{\bar{\alpha}},\bar{\alpha}_t)= b(t, X_t^{\alpha} \lor X_t^{\bar{\alpha}}, \alpha_t^{\vee})$ for a.a.\ $t \in [0,T]$ with $X_t^{\alpha} < X_t^{\bar{\alpha}}$ or $X_t^{\alpha} = X_t^{\bar{\alpha}}$ and  $b(t,X_t^{\alpha},\alpha_t) < b(t,X_t^{\bar{\alpha}},\bar{\alpha_t})$. Therefore $dS_t/dt=b(t, X_t^\alpha \lor X_t^{\bar{\alpha}}, \alpha_t^{\vee})$ for $\mathbb{P} \otimes dt $ a.a.\ $(\omega,t) \in \Omega \times [0,T]$, which proves (\ref{SDE.sup}).

The arguments employed above allow to prove that the process $X^\alpha \land X^{\bar{\alpha}}$ satisfies the SDE controlled by $\alpha^\wedge$; i.e.
\begin{equation*}
X_t^\alpha \land X_t^{\bar{\alpha}} = x_0 + \int_0^t b(s, X_s^\alpha \land X_s^{\bar{\alpha}} , \alpha_s^\wedge )ds + \int_0^t \sigma_s dW_s, \quad \forall t \in [0,T], \quad \mathbb{P}\text{-a.s.},
\end{equation*} 
where $\alpha^\wedge$ is defined by 
$$
\alpha^\wedge_s := 
\begin{cases}
\bar{\alpha}_s
& \text{on} \quad \{ X_s^{\alpha} > X_s^{\bar{\alpha}} \} \cup \{ X_s^{\alpha} = X_s^{\bar{\alpha}}, \  b(s,X_s^{\alpha},\alpha_s) \geq b(s,X_s^{\bar{\alpha}},\bar{\alpha}_s) \},\\  
\alpha_s 
& \text{on} \quad \{ X_s^{\alpha} < X_s^{\bar{\alpha}} \} \cup \{ X_s^{\alpha} = X_s^{\bar{\alpha}}, \   b(s,X_s^{\alpha},\alpha_s) < b(s,X_s^{\bar{\alpha}},\bar{\alpha}_s) \}.\\ 
\end{cases} 
$$
The proof of the lemma is therefore completed.
\end{proof}

We now prove the fundamental property of the best-response-map.
\begin{lemma}\label{BRM.increasing}
The best-response-map $R$ is increasing in $(L,\leq^{\text{\tiny{$L$}}})$.	
\end{lemma}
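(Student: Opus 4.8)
The plan is to follow the classical monotone-comparative-statics argument of Topkis, adapted to the present stochastic setting. Fix two feasible flows with $\mu \leq^{\text{\tiny{$L$}}} \nu$ and let $\alpha^\mu$, $\alpha^\nu$ be the (unique) optimal controls for $\mu$ and $\nu$, with associated optimal trajectories $X^\mu = X^{\alpha^\mu}$ and $X^\nu = X^{\alpha^\nu}$. Applying Lemma \ref{lemma.trajectories.lattice} to the pair $(\alpha^\mu,\alpha^\nu)$ produces admissible controls $\alpha^\vee$ and $\alpha^\wedge$ with $X^{\alpha^\vee} = X^\mu \lor X^\nu$ and $X^{\alpha^\wedge} = X^\mu \land X^\nu$. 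The goal is to show that the optimal trajectories are in fact pathwise ordered, $X^\mu_t \leq X^\nu_t$ for all $t$, $\mathbb P$-a.s.; by the definition of $\leq^{\text{st}}$ in \eqref{fistorderstochdom} this immediately yields $\mathbb P\circ X^\mu_t \leq^{\text{st}} \mathbb P\circ X^\nu_t$ for every $t$, hence $R(\mu)\leq^{\text{\tiny{$L$}}} R(\nu)$.

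The heart of the argument is the single inequality
\[
J(\alpha^\wedge,\mu) + J(\alpha^\vee,\nu) \;\leq\; J(\alpha^\mu,\mu) + J(\alpha^\nu,\nu),
\]
which I would establish by splitting the cost functional \eqref{cost.functional} into its three constituents. For the control-dependent running cost $l$, I would observe that, by the very construction in Lemma \ref{lemma.trajectories.lattice}, at $\mathbb P\otimes dt$-almost every $(\omega,t)$ the unordered pairs $\big\{(X^{\alpha^\vee}_t,\alpha^\vee_t),(X^{\alpha^\wedge}_t,\alpha^\wedge_t)\big\}$ and $\big\{(X^\mu_t,\alpha^\mu_t),(X^\nu_t,\alpha^\nu_t)\big\}$ coincide; hence the $l$-contributions cancel exactly. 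For the measure-dependent costs $f$ and $g$, the pairing is chosen so that the lower trajectory $X^\mu\land X^\nu$ is matched with the smaller measure $\mu$ and the upper trajectory $X^\mu\lor X^\nu$ with the larger measure $\nu$. A pointwise case distinction according to the sign of $X^\mu - X^\nu$ then makes the two sides coincide on $\{X^\mu\leq X^\nu\}$, while on $\{X^\mu > X^\nu\}$ (resp.\ $\{X^\mu_T > X^\nu_T\}$) the required inequality reduces, after rearrangement, to the decreasing-differences inequality of Assumption \ref{ass.submodularity} with $\bar x = X^\mu$, $x = X^\nu$, $\bar\mu = \nu$, $\mu = \mu$, using $\mu_t\leq^{\text{st}}\nu_t$ (valid for $\pi$-a.a.\ $t$, in particular at $t=T$ since $\delta_T\leq\pi$). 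Integrating in time and taking expectations then gives the displayed inequality.

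With this in hand I would close the argument using optimality and uniqueness. Optimality of $\alpha^\mu$ for $\mu$ and of $\alpha^\nu$ for $\nu$ yields the reverse inequalities $J(\alpha^\mu,\mu)\leq J(\alpha^\wedge,\mu)$ and $J(\alpha^\nu,\nu)\leq J(\alpha^\vee,\nu)$; adding them and comparing with the displayed inequality forces both to be equalities. Thus $\alpha^\wedge$ is optimal for $\mu$ and $\alpha^\vee$ is optimal for $\nu$, and the uniqueness part of Assumption \ref{ass.main} gives $X^{\alpha^\wedge}=X^\mu$ and $X^{\alpha^\vee}=X^\nu$ up to indistinguishability. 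Since $X^{\alpha^\wedge}=X^\mu\land X^\nu$, this means $X^\mu\land X^\nu = X^\mu$, i.e.\ $X^\mu_t\leq X^\nu_t$ for all $t$, $\mathbb P$-a.s., which is the desired pathwise ordering.

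I expect the main obstacle to be the careful bookkeeping in the cost comparison rather than any deep difficulty: one must verify the exact cancellation of the $l$-term (which hinges on $\alpha^\vee,\alpha^\wedge$ being a pointwise rearrangement of $\alpha^\mu,\alpha^\nu$, including the correct treatment of the diagonal set $\{X^\mu=X^\nu\}$, where the assignment is made through the drift comparison), and one must track the $\mathbb P\otimes dt$- and $\pi$-null sets so that Assumption \ref{ass.submodularity} and the ordering $\mu\leq^{\text{\tiny{$L$}}}\nu$ can be invoked simultaneously. The conceptually delicate point is the choice of pairing, namely $\alpha^\wedge$ with $\mu$ and $\alpha^\vee$ with $\nu$: it is exactly this matching of the lower/upper trajectory with the smaller/larger measure that orients the submodularity inequality in the right direction.
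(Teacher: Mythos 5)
Your proposal is correct and follows essentially the same route as the paper's proof: both use Lemma \ref{lemma.trajectories.lattice} to build $\alpha^\vee,\alpha^\wedge$, exploit the pointwise rearrangement identity for the $l$-term together with the decreasing-differences inequality of Assumption \ref{ass.submodularity} for $f$ and $g$, and then conclude via optimality and the uniqueness part of Assumption \ref{ass.main} that $X^\mu\land X^{\bar\mu}=X^\mu$, hence $R(\mu)\leq^{\text{\tiny{$L$}}} R(\bar\mu)$. The only difference is cosmetic: you package the comparison as the symmetric four-point inequality $J(\alpha^\wedge,\mu)+J(\alpha^\vee,\nu)\leq J(\alpha^\mu,\mu)+J(\alpha^\nu,\nu)$ and add the two optimality inequalities, whereas the paper chains the same estimates as $0\leq J(\alpha^\vee,\bar\mu)-J(\alpha^{\bar\mu},\bar\mu)=J(\alpha^\mu,\bar\mu)-J(\alpha^\wedge,\bar\mu)\leq J(\alpha^\mu,\mu)-J(\alpha^\wedge,\mu)$.
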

\begin{proof}
Take $\bar{\mu},\mu \in L$ such that $\mu\leq^{\text{\tiny{$L$}}}\bar{\mu}$ and let $(X^{\bar{\mu}},\alpha^{\bar{\mu}})$ and $(X^{\mu},\alpha^{\mu})$ be the optimal pairs related to $\bar{\mu}$ and $\mu$, respectively. 
Define the set 
$$
B := \{ X_s^{\mu} > X_s^{\bar{\mu}} \} \cup \{  X_s^{\mu} = X_s^{\bar{\mu}},  \   b(s,X_s^{\mu},\alpha_s^{\mu}) \geq b(s,X_s^{\bar{\mu}},{\alpha_s}^{\bar{\mu}})\}.
$$
As it is shown in Lemma \ref{lemma.trajectories.lattice}, the process $X^{\mu}\lor X^{\bar{\mu}}$ is the solution to the dynamics (\ref{dynamics}) controlled by $\alpha_t^{\vee}: = \alpha_t^{\mu} \mathds{1}_B(t) + \alpha_t^{\bar{\mu}} \mathds{1}_{B^c}(t)$, and the process $X^{\mu}\land X^{\bar{\mu}}$ is the solution to the dynamics controlled by $\alpha_t^\wedge := \alpha_t^{\mu} \mathds{1}_{B^c}(t) + \alpha_t^{\bar{\mu}} \mathds{1}_{B}(t)$.  

By the admissibility  of $\alpha^{\vee}$ and the optimality of $\alpha^{\bar{\mu}}$ we can write 
\begin{align}\label{J-J}
0 \leq J(\alpha^{\vee},\bar{\mu}) - J(\alpha^{\bar{\mu}},{\bar{\mu}}) = &  \mathbb{E} \bigg[ \int_0^T \Big[f(t,X_t^\mu \lor X_t^{\bar{\mu}}, {\bar{\mu}}_t)  -f(t,X_t^{{\bar{\mu}}}, {\bar{\mu}}_t) \Big] dt  \bigg] \\ \notag
& \quad \quad + \mathbb{E} \bigg[ \int_0^T \Big[ l(t,X_t^{\mu}\lor X_t^{\bar{\mu}}, \alpha_t^{\vee}) -l(t,X_t^{\bar{\mu}}, \alpha_t^{\bar{\mu}}) \Big] dt \bigg] \\ \notag
& \quad \quad \quad \quad + \mathbb{E} \left[ g(X_T^\mu \lor X_T^{\bar{\mu}}, {\bar{\mu}}_T)- g(X_T^{\bar{\mu}}, {\bar{\mu}}_T) \right].
\end{align}
Next, from the definition of $B$ and the trivial identity $1=\mathds{1}_{B}(t) + \mathds{1}_{B^c}(t)$ we find 
\begin{align*}
\mathbb{E} \bigg[ \int_0^T \Big[ f(t,X_t^\mu \lor X_t^{\bar{\mu}}, {\bar{\mu}}_t)  -f(t,X_t^{{\bar{\mu}}}, {\bar{\mu}}_t) \Big] dt  \bigg] &  = \mathbb{E} \bigg[ \int_0^T \mathds{1}_{B}(t) \Big[f(t,X_t^\mu , {\bar{\mu}}_t)  -f(t,X_t^{{\bar{\mu}}}, {\bar{\mu}}_t) \Big] dt  \bigg] \\
& =\mathbb{E} \bigg[ \int_0^T  \Big[ f(t,X_t^\mu , {\bar{\mu}}_t)  -f(t,X_t^\mu \land X_t^{{\bar{\mu}}}, {\bar{\mu}}_t) \Big] dt  \bigg],  
\end{align*}
as well as
$$
\mathbb{E} \left[ g(X_T^\mu \lor X_T^{\bar{\mu}}, {\bar{\mu}}_T)- g(X_T^{\bar{\mu}}, {\bar{\mu}}_T) \right] = \mathbb{E} \left[ g(X_T^\mu , {\bar{\mu}}_T)- g(X_T^\mu \land X_T^{\bar{\mu}}, {\bar{\mu}}_T) \right].
$$
In the same way, by the definition of $\alpha^{\vee}$ and $\alpha^\wedge$, we see that
\begin{align*}
\mathbb{E} \bigg[ \int_0^T \Big[ l(t,X_t^{\mu}\lor X_t^{\bar{\mu}}, \alpha_t^{\vee}) -l(t,X_t^{\bar{\mu}}, \alpha_t^{\bar{\mu}}) \Big] dt \bigg] & =  \mathbb{E} \bigg[ \int_0^T \mathds{1}_{B}(t) \Big[ l(t,X_t^{\mu}, {\alpha}_t^{\mu}) - l(t,X_t^{\bar{\mu}}, \alpha_t^\wedge) \Big] dt \bigg] \\
& = \mathbb{E} \bigg[ \int_0^T \Big[ l(t,X_t^{\mu}, \alpha_t^{\mu} ) -l(t,X_t^{\mu} \land X_t^{\bar{\mu}}, \alpha_t^\wedge) \Big] dt \bigg].
\end{align*}
Now, the latter three equalities allow to rewrite (\ref{J-J})  as 
\begin{align}
\label{J_J.local}
0 \leq J(\alpha^{\vee},\bar{\mu}) - J(\alpha^{\bar{\mu}},{\bar{\mu}})  & =   \mathbb{E} \bigg[ \int_0^T \Big[f(t,X_t^\mu , {\bar{\mu}}_t)  -f(t,X_t^{\mu} \land X_t^{{\bar{\mu}}}, {\bar{\mu}}_t) \Big] dt  \bigg] \\ \notag
& \quad \quad + \mathbb{E} \bigg[ \int_0^T \Big[ l(t,X_t^{\mu}, \alpha_t^{\mu} ) -l(t,X_t^{\mu} \land X_t^{\bar{\mu}}, \alpha_t^\wedge) \Big] dt \bigg] \\ \notag
& \quad \quad \quad \quad + \mathbb{E} \left[ g(X_T^\mu , {\bar{\mu}}_T)- g(X_T^{\mu} \land X_T^{\bar{\mu}}, {\bar{\mu}}_T) \right],
\end{align}
which reads as
\begin{equation}
\label{J_J.switch}
    J(\alpha^{\vee},\bar{\mu}) - J(\alpha^{\bar{\mu}},{\bar{\mu}})=  J(\alpha^{\mu},\bar{\mu}) -  J(\alpha^\wedge,\bar{\mu})
\end{equation}
Finally, exploiting Assumption \ref{ass.submodularity} in the expectations in (\ref{J_J.local}), we deduce that 
\begin{align}
\label{submodularity.of.J}
0 \leq  J(\alpha^{\vee},\bar{\mu}) - J(\alpha^{\bar{\mu}},{\bar{\mu}}) &\leq   \mathbb{E} \bigg[ \int_0^T \Big[ f(t,X_t^\mu , {\mu}_t)  -f(t,X_t^{\mu} \land X_t^{{\bar{\mu}}}, {{\mu}}_t) \Big] dt  \bigg] \\ \notag
& \quad \quad + \mathbb{E} \bigg[ \int_0^T \Big[ l(t,X_t^{\mu}, \alpha_t^{\mu} ) -l(t,X_t^{\mu} \land X_t^{\bar{\mu}}, \alpha_t^\wedge) \Big] dt \bigg] \\ \notag
& \quad \quad \quad \quad + \mathbb{E} \left[ g(X_T^\mu , {{\mu}}_T)- g(X_T^{\mu} \land X_T^{\bar{\mu}}, {{\mu}}_T) \right]\\
&= J(\alpha^{\mu},\mu) -  J(\alpha^\wedge,{\mu}). 
\end{align}
Hence the control $\alpha^\wedge $ is a minimizer for $J(\cdot,\mu)$, and, by uniqueness of the minimizer, we conclude that $X^\mu \land X^{\bar{\mu}}=X^{\mu}$; that is, $X_t^{{\mu}} \leq X_t^{\bar{\mu}}$ for each $t\in [0,T]$ $\mathbb{P}$-a.s., which implies that $R(\mu) \leq^{\text{\tiny{$L$}}} R(\bar{\mu}) $.
\end{proof}
\begin{remark}\label{remark.BRM.increasing}
For later use, we point out that we have actually proved that for $\bar{\mu},\mu \in L$ such that $\mu \leq^{\text{\tiny{$L$}}} \bar{\mu}$ we have that $X_t^{{\mu}} \leq X_t^{\bar{\mu}}$ for each $t\in [0,T]$, $\mathbb{P}$-a.s.
\end{remark}

\subsection{Existence and approximation of MFG solutions} \label{mainresults}

We finally obtain an existence result for the mean field game solutions.

\begin{theorem} \label{theorem.existence}
Under the assumptions \ref{ass.main} and \ref{ass.submodularity},  the set of MFG solutions $(\mathcal{M}, \leq^{\text{\tiny{$L$}}})$ is a nonempty complete lattice: in particular there exist a minimal and a maximal MFG solution.
\end{theorem}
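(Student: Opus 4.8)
The plan is to recognize the set of MFG solutions $\mathcal M$ as exactly the set of fixed points of the best-response-map $R\colon L\to L$, and then to invoke Tarski's fixed point theorem \cite{Tarski55}. That theorem asserts that whenever an order-preserving self-map acts on a complete lattice, its fixed-point set is itself a nonempty complete lattice. Both hypotheses have already been secured by the preparatory lemmas, so the whole argument reduces to checking that they apply verbatim.

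First I would recall that, by the definition of an MFG solution, $\mu^*\in\mathcal M$ if and only if $R(\mu^*)=\mu^*$, so that $\mathcal M$ coincides with the set of fixed points of $R$. Next, Lemma \ref{L.complete} provides the ambient structure, guaranteeing that $(L,\leq^{\text{\tiny{$L$}}})$ is a complete lattice, while Lemma \ref{BRM.increasing} guarantees that $R$ is increasing with respect to $\leq^{\text{\tiny{$L$}}}$. I would also record (as already observed just before Lemma \ref{L.complete}) that $R$ indeed maps $L$ into $L$, i.e.\ best responses remain in the order interval $[\mu^{\rm Min},\mu^{\rm Max}]$; this is exactly what makes $R$ a genuine self-map and is a prerequisite for Tarski's theorem.

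Applying Tarski's theorem then yields at once that $\mathcal M$ is nonempty and is a complete lattice under $\leq^{\text{\tiny{$L$}}}$. In particular, being a nonempty complete lattice, $\mathcal M$ possesses a least and a greatest element, which are the minimal and maximal MFG solutions. To be fully explicit, one may characterize the minimal solution as $\bigwedge\{\mu\in L : R(\mu)\leq^{\text{\tiny{$L$}}}\mu\}$ and the maximal solution as $\bigvee\{\mu\in L : \mu\leq^{\text{\tiny{$L$}}} R(\mu)\}$, the infimum and supremum being taken in the complete lattice $L$.

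As for the main obstacle: the theorem is essentially a one-line corollary of Tarski's theorem, so at this stage no genuine difficulty remains. All the substantive work has already been done in establishing the two ingredients, namely the completeness of the flow lattice $L$ (which ultimately rests on the tightness/uniform-integrability bounds of Assumption \ref{ass.main} together with the stochastic-dominance machinery) and, more delicately, the monotonicity of $R$ (which is precisely where the submodularity Assumption \ref{ass.submodularity} and the lattice structure of admissible trajectories from Lemma \ref{lemma.trajectories.lattice} are genuinely used). Thus the only thing I would be careful about is citing Tarski's theorem in the correct form for \emph{order-preserving} maps on complete lattices, so that the nonemptiness and the lattice structure of the fixed-point set are both delivered simultaneously.
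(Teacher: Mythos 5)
Your proof is correct and follows exactly the same route as the paper: identify $\mathcal{M}$ with the fixed-point set of the best-response-map $R$, combine Lemma \ref{L.complete} (completeness of $(L,\leq^{\text{\tiny{$L$}}})$) with Lemma \ref{BRM.increasing} (monotonicity of $R$), and conclude by Tarski's fixed point theorem. Your additional remarks (that $R$ is a genuine self-map of $L$, and the explicit Tarski characterizations of the extremal solutions) are accurate refinements of the same argument.
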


\begin{proof}
Combining Lemma \ref{L.complete} together with Lemma \ref{BRM.increasing}, we have that the best response map $R$ is an increasing map from the complete lattice $(L ,\leq^{\text{\tiny{$L$}}}) $ into itself. The statement then follows from Tarski's fixed point theorem (see Theorem 1 in \cite{Tarski55}).
\end{proof}


Following \cite{Topkis79}, we introduce \emph{learning procedures} $ \{ \underline{\mu}^n \}_{n \in \mathbb{N}}  , \, \{ \overline{\mu}^n \}_{n \in \mathbb{N}} \subset L $ for the mean field game problem as follows:
\begin{itemize}
    \item $\underline{ \mu}^0:=\inf L, \, \overline{ \mu}^0:=\sup L$;
    \item $\underline{\mu}^{n+1}=R(\underline{\mu}^n)$, $\overline{\mu}^{n+1}=R(\overline{\mu}^n)$  for each $n\geq 1$.
\end{itemize}
For simplicity, we make the following assumption. 
\begin{assumption}\label{ass.convergence}\
\begin{enumerate}
\item  The control set $U \subset \mathbb{R}$ is compact and there exists some $p>1$ such that $\mathbb{E}[|\xi|^p]<\infty$.
\item The dynamics of the system given by $b(t,x,a)=c_t + p_t x + q_t a$, where $c,p$ and $q$ are deterministic and continuous in $t$. The volatility $\sigma$ is constant.
\item The cost functions $f,g$ are continuous, the cost function $l$ is convex and lower semicontinuous.
\item $f, \, l$ and $g$ have subpolynomial growth; that is,  there exists a constant $C>0$ such that
$$
|f(t,x,\mu)| +|l(t,x,a)|+ |g(x,\mu)| \leq C (1 + |x|^p ), \ \forall \, (t,x,a,\mu) \in [0,T] \times \mathbb{R} \times U \times [\mu^{\rm Min},\mu^{\rm Max}].
$$
\end{enumerate}
\end{assumption}
 
\begin{remark}\label{remark.continuity.of.the.laws.sol.SDE}
Under Assumption \ref{ass.convergence} it can be easily verified that for each admissible control $\alpha$ the map $t \mapsto \mathbb{P} \circ X_t^{\alpha}$ is continuous in the weak topology. 
\end{remark}

We then have the following convergence result.

\begin{theorem} \label{theorem.convergence.learning.procedure}
Under Assumptions \ref{ass.main}, \ref{ass.submodularity} and \ref{ass.convergence} we have:
\begin{enumerate}
    \item[(i)] The sequence $ \{\underline{\mu}^n \}_{n \in \mathbb{N}}$ is increasing  in $(L ,\leq^{\text{\tiny{$L$}}}) $ and it weakly converges to the minimal  MFG solution, $\pi$-a.e.
    \item[(ii)] The sequence $ \{\overline{\mu}^n \}_{n \in \mathbb{N}}$ is decreasing in $(L ,\leq^{\text{\tiny{$L$}}}) $ and it weakly converges to the maximal MFG solution, $\pi$-a.e.
\end{enumerate}
\end{theorem}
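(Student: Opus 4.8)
The plan is to treat the two sequences dually and to combine the lattice-theoretic monotonicity of the iterates with a stability argument for the underlying optimal control problem. First I establish monotonicity. Since $\underline{\mu}^0=\inf L$ is the least element of $L$ and $R$ maps $L$ into itself, we have $\underline{\mu}^0\leq^{\text{\tiny{$L$}}} R(\underline{\mu}^0)=\underline{\mu}^1$; applying Lemma \ref{BRM.increasing} inductively (if $\underline{\mu}^n\leq^{\text{\tiny{$L$}}}\underline{\mu}^{n+1}$ then $R(\underline{\mu}^n)\leq^{\text{\tiny{$L$}}} R(\underline{\mu}^{n+1})$) shows that $\{\underline{\mu}^n\}$ is increasing. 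Dually, $\overline{\mu}^0=\sup L$ gives $R(\overline{\mu}^0)\leq^{\text{\tiny{$L$}}}\overline{\mu}^0$, so $\{\overline{\mu}^n\}$ is decreasing. By Lemma \ref{L.complete} the lattice supremum $\underline{\mu}^*:=\sup_n\underline{\mu}^n$ and infimum $\overline{\mu}^*:=\inf_n\overline{\mu}^n$ exist in $L$.

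Next I upgrade the order convergence to weak convergence of measures. For $\pi$-a.a.\ $t$ the sequence $(\underline{\mu}^n_t)_n$ is increasing for $\leq^{\text{st}}$, so by \eqref{fistorderstochdom} the distribution functions $s\mapsto\underline{\mu}^n_t(s)$ are nonincreasing in $n$; since every measure lies in the tight interval $[\mu^{\rm Min},\mu^{\rm Max}]$, the pointwise limit of these distribution functions is again that of a probability measure, which we identify with $\underline{\mu}^*_t$, and $\underline{\mu}^n_t\to\underline{\mu}^*_t$ weakly. The crucial additional observation, supplied by Remark \ref{remark.BRM.increasing}, is that the optimal state processes are pathwise monotone: writing $X^n:=X^{\underline{\mu}^n}$, the relation $\underline{\mu}^n\leq^{\text{\tiny{$L$}}}\underline{\mu}^{n+1}$ yields $X^n_t\leq X^{n+1}_t$ $\mathbb{P}$-a.s.\ for each $t$. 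The uniform moment bound \eqref{integrabilityassumption} then forces $X^n_t\uparrow\widetilde X_t$ $\mathbb{P}$-a.s.\ for some $\widetilde X_t$, and monotone a.s.\ convergence gives weak convergence of the laws; since $\underline{\mu}^{n+1}_t=R(\underline{\mu}^n)_t=\mathbb{P}\circ X^n_t$, uniqueness of weak limits forces $\underline{\mu}^*_t=\mathbb{P}\circ\widetilde X_t$.

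It remains to show that $\underline{\mu}^*$ is a fixed point of $R$, i.e.\ that $\widetilde X$ is the optimal state for $\underline{\mu}^*$; this is where Assumption \ref{ass.convergence} enters. Because $U$ is compact the optimal controls $\alpha^n:=\alpha^{\underline{\mu}^n}$ are bounded, so along a subsequence $\alpha^n\rightharpoonup\widetilde\alpha$ weakly in $L^2(\Omega\times[0,T])$, and the linearity of $b(t,x,a)=c_t+p_tx+q_ta$ in $a$ lets me pass to the limit in the drift $\int_0^t q_s\alpha^n_s\,ds$ and conclude that $\widetilde X$ solves SDE($\widetilde\alpha$). For any competing admissible $\beta$ with state $X^\beta$ under $\underline{\mu}^*$, optimality of $\alpha^n$ for $\underline{\mu}^n$ gives $J(\alpha^n,\underline{\mu}^n)\leq J(\beta,\underline{\mu}^n)$. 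On the right-hand side, continuity of $f,g$ in $\mu$ together with the subpolynomial growth and the uniform moment bounds yields $J(\beta,\underline{\mu}^n)\to J(\beta,\underline{\mu}^*)$ by dominated convergence; on the left-hand side, the a.s.\ convergence $X^n\to\widetilde X$ with continuity of $f,g$ handles the measure-dependent costs, while the convexity and lower semicontinuity of $l$ render the control cost weakly lower semicontinuous, so $\liminf_n J(\alpha^n,\underline{\mu}^n)\geq J(\widetilde\alpha,\underline{\mu}^*)$. Combining the two estimates gives $J(\widetilde\alpha,\underline{\mu}^*)\leq J(\beta,\underline{\mu}^*)$ for every admissible $\beta$, so by the uniqueness part of Assumption \ref{ass.main} we obtain $\widetilde X=X^{\underline{\mu}^*}$ and hence $R(\underline{\mu}^*)=\mathbb{P}\circ\widetilde X=\underline{\mu}^*$.

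Finally I check minimality. If $\nu\in\mathcal{M}$ is any MFG solution then $\nu\in L$, so $\underline{\mu}^0=\inf L\leq^{\text{\tiny{$L$}}}\nu$, and applying $R$ repeatedly (increasing, with $R(\nu)=\nu$) gives $\underline{\mu}^n\leq^{\text{\tiny{$L$}}}\nu$ for all $n$, whence $\underline{\mu}^*\leq^{\text{\tiny{$L$}}}\nu$; thus $\underline{\mu}^*$ is the minimal solution, proving (i). Statement (ii) follows by the dual argument, replacing increasing limits by decreasing ones and suprema by infima throughout (and using the lower bound $X^n_t\downarrow$ for the maximal sequence). The main obstacle is the stability step of the third paragraph: passing from $\alpha^n\rightharpoonup\widetilde\alpha$ to optimality of the limit requires the joint interplay of the linear-in-control dynamics (to identify $\widetilde X$ with $X^{\widetilde\alpha}$ under merely weak control convergence) and the convexity of $l$ (for weak lower semicontinuity), which is precisely the purpose of Assumption \ref{ass.convergence}.
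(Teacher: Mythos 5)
Your proof is correct and shares the paper's overall skeleton (monotone iterates, lattice limit, stability of the optimality inequality, induction for minimality), but the key compactness/limit-identification step is implemented by a genuinely different route. Where the paper uses Arzel\`a--Ascoli to get \emph{uniform} pathwise convergence of the optimal states, you exploit Remark \ref{remark.BRM.increasing} more strongly: monotonicity plus the moment bound \eqref{integrabilityassumption} gives pointwise a.s.\ convergence $X^n_t\uparrow \widetilde X_t$, with no equicontinuity needed. More substantially, the paper handles the controls via the Banach--Saks theorem: the Ces\`aro means $\beta^m=\frac1m\sum_j\alpha^{n_j}$ converge \emph{pointwise} $\mathbb P\otimes dt$-a.e., so that the lower semicontinuity of $l$ can be used elementarily along strongly convergent sequences, after a Jensen-type inequality $\frac1m\sum_j l(t,X^{n_j}_t,\alpha^{n_j}_t)\geq l(t,X^{\beta^m}_t,\beta^m_t)$ (which, note, requires $l$ to be convex \emph{jointly} in $(x,a)$). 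You instead take a weak $L^2(\Omega\times[0,T])$ limit $\alpha^n\rightharpoonup\widetilde\alpha$ and must therefore invoke weak--strong lower semicontinuity of the convex integral functional $(x,\alpha)\mapsto \mathbb E\int_0^T l(t,x_t,\alpha_t)\,dt$ (Mazur's lemma if $l$ is jointly convex, or an Ioffe-type theorem if $l$ is convex only in $a$); as written this is asserted rather than justified, and you should cite or prove it, though it is standard under Assumption \ref{ass.convergence}. Both routes rely on the linearity of the dynamics to identify the limit state as the solution of SDE($\widetilde\alpha$), and both share the same implicit requirement (present also in the paper) that $U$ be convex and closed so that the limiting, respectively Ces\`aro-averaged, control is admissible. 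What each buys: your argument is leaner (no Arzel\`a--Ascoli, no Ces\`aro bookkeeping) and, via Ioffe, works when $l$ is convex in the control variable only; the paper's argument is more elementary and self-contained, needing nothing beyond Banach--Saks, Jensen, and Fatou.
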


\begin{proof} We only prove the first claim, since the second follows by analogous arguments.

By Lemma \ref{BRM.increasing} the sequence $\{ \underline{\mu}^n\}_{n\in\mathbb{N}}$ is clearly increasing. Moreover, the completeness of the lattice $L$ allows to define $\underline{\mu}^*$ as the least upper bound in the lattice $(L ,\leq^{\text{\tiny{$L$}}}) $ of  $\{ \underline{\mu}^n\}_{n\in\mathbb{N}}$, and, by Remark \ref{append.remark.increasing.sequence.weakly.converges} in Appendix \ref{appendA}, the sequence $\underline{\mu}^n$ converges weakly to $\underline{\mu}^*$ $\pi$-a.e.

Define now, for each $n\geq 1$, the optimal pairs $(X^n, \alpha^n):=(X^{\underline{\mu}^{n-1}},\alpha^{\underline{\mu}^{n-1}})$.
Since the controls $\alpha^n$ take values in the compact set $U$, the processes $ X^n $ are pathwise equicontinuous and equibounded. Moreover, by Remark \ref{remark.BRM.increasing}, the sequence $(X^n)_{n\in \mathbb{N}}$ is increasing. Therefore, by Arzelà-Ascoli's theorem, we can find an adapted  process $X$, such that $X^n$ converges uniformly on $[0,T]$ to $X$, $\mathbb{P}$-a.s. 

We now prove that $\underline{\mu}^*$ is a MFG solution. Since $\underline{\mu}_t^n=R(\underline{\mu}^{n-1})_t=\mathbb{P} \circ X_t^{\underline{\mu}^{n-1}}=\mathbb{P} \circ X_t^n$ and since $X^n$ converges uniformly to $X$ $\mathbb{P}$-a.s.\ and $\underline{\mu}_t^n$ converges weakly to $\underline{\mu}_t^*$ for $\pi$-a.a.\ $t \in [0,T]$, we deduce that $\underline{\mu}_t^*=\mathbb{P} \circ X_t$ for $\pi$-a.a $t\in [0,T]$. Hence, by the continuity of the map $ t \mapsto \mathbb{P} \circ X_t$  in the weak topology (see Remark \ref{remark.continuity.of.the.laws.sol.SDE}), we have $\underline{\mu}_t^*=\mathbb{P} \circ X_t$ for each $t\in [0,T]$.
It remains to find an admissible control $\alpha$ such that $X=X^\alpha$ and $(X,\alpha)$ is the optimal pair for $\underline{\mu}^*$.

In order to do so, thanks to the compactness of $U$, we invoke Banach-Saks' theorem to find a subsequence of indexes $(n_j)_{j\in \mathbb{N} }$ such that the Cesàro means of $(\alpha^{n_j})$ converges pointwise to the process $\alpha$; that is, 
\begin{equation}
\label{cesaro.convergence}
\beta_t^{m}:=\frac{1}{m}\sum_{j=1}^{m} \alpha_t^{n_j} \rightarrow \alpha_t,\text{ as } m\to \infty, \ \mathbb{P}\otimes dt\text{-a.e.}
\end{equation}  
Observe moreover that, by Assumption \ref{ass.convergence}~(2), we have $X^{\beta^m}=\frac{1}{m}\sum_{j=1}^m X^{n_j}$. Hence, because we already know that $X^{n_j}$ converges to $X$ uniformly in $[0,T]$, $\mathbb{P}$-a.s.\ as $n_j\to \infty$, we deduce that   $X^{\beta^m}$ converges uniformly to $X$ $\mathbb{P}$-a.s.\ as $m\to\infty$, and that
$$
X_t = \xi + \int_0^t (c_s + p_s x_s + q_s \alpha_s ) ds + \sigma W_t, \quad \forall \, t \in [0,T], \ \mathbb{P}\text{-a.s.;} 
$$
that is, the process $X$ is the solution to the dynamics controlled by $\alpha$. Furthermore, by the subpolynomial growth of the costs, we have $-\infty < J(\alpha,\underline{\mu}^*)$.

We now prove that the pair $(X,\alpha)$ is optimal for the flow  $\underline{\mu}^*$. Observe that, for each admissible $\beta$ and each $n_j\geq 1 $, by the optimality of the pair $(X^{n_j}, \alpha^{n_j})$ for the flow $\underline{\mu}^{n_j-1}$, we have
$$
J(\alpha^{n_j},\underline{\mu}^{n_j-1}) \leq J(\beta, \underline{\mu}^{n_j-1}) .
$$
Summing over $ j \leq m$, we write
$$
\frac{1}{m}\sum_{j=1}^{m}  \mathbb{E} \left[ \int_0^T \Big[ f(t,X_t^{n_j}, \underline{\mu}_t^{n_j-1}) + l(t, X_t^{n_j}, \alpha_t^{n_j}) \Big] dt + g(X_T^{n_j}, \underline{\mu}_T^{n_j-1}) \right]  \leq \frac{1}{m}\sum_{j=1}^{m}  J(\beta, \underline{\mu}^{n_j-1}),
$$
which, by convexity of $l$, in turn implies that
\begin{align}
\label{last.ineq}
\mathbb{E} \bigg[ \int_0^T  l(t, X_t^{\beta^m}, \beta_t^{m})dt \bigg]+ \frac{1}{m} \sum_{j=1}^{m}  \mathbb{E} \bigg[ \int_0^T  f(t,X_t^{n_j}, \underline{\mu}_t^{n_j-1})   dt   +& g(X_T^{n_j}, \underline{\mu}_T^{n_j-1}) \bigg] \\ \notag
& \leq \frac{1}{m}\sum_{j=1}^{m}  J(\beta, \underline{\mu}^{n_j-1}).
\end{align}
By the convergence of $X^{\beta^m}$ and  $\beta^m$, thanks to the lower semi-continuity and the subpolynomial growth of $l$, we can take limits as $ m \to \infty$ in the first expectation in the latter inequality to find that
\begin{equation}
\label{limit.ell}
\mathbb{E} \bigg[ \int_0^T  l(t, X_t, \alpha_t)dt \bigg] \leq \lim_{m}\mathbb{E} \bigg[ \int_0^T  l(t, X_t^{\beta^m}, \beta_t^{m})dt \bigg].
\end{equation}
Furthermore, by the convergence of $X^n$ and of $\underline{\mu}^n$ and the continuity of the costs $f$ and $g$, we can use the subpolynomial growth of $f$ and $g$ and the boundedness of the sequence $\mu^n$ (cf.\ Remark \ref{role.p}) to deduce that
\begin{equation}
\label{other.limits}
\mathbb{E} \bigg[ \int_0^T  f(t,X_t, \underline{\mu}_t^*)   dt + g(X_T, \underline{\mu}_T^*) \bigg] = \lim_{m} \frac{1}{m} \sum_{j=1}^{m}  \mathbb{E} \bigg[ \int_0^T  f(t,X_t^{n_j}, \underline{\mu}_t^{n_j-1})   dt   + g(X_T^{n_j}, \underline{\mu}_T^{n_j-1}) \bigg]
\end{equation}
and that
\begin{equation}
\label{last.limit}
J(\beta, \underline{\mu}^*) = \lim_{m} \frac{1}{m}\sum_{j=1}^{m}  J(\beta, \underline{\mu}^{n_j-1}).
\end{equation}
Finally, using (\ref{limit.ell}), (\ref{other.limits}) and (\ref{last.limit}) in (\ref{last.ineq}), we conclude that $J(\alpha, \underline{\mu}^*)\leq J(\beta, \underline{\mu}^*)$, which, in turn, proves the optimality of $(X,\alpha)$ for $\underline{\mu}^*$. Hence, $\underline{\mu}^*$ is a MFG solution. 

It only remains to prove the minimality of $\underline{\mu}^*$. Suppose that $\nu^* \in L$ is another MFG solution. By definition, $\inf L = \underline{\mu}^0 \leq^{\text{\tiny{$L$}}} \nu^*$. Since $R$ is increasing, we have $\underline{\mu}^1=R(\underline{\mu}^{0})\leq^{\text{\tiny{$L$}}} R(\nu^*)=\nu^* $ and by induction we conclude that  $\underline{\mu}^n\leq^{\text{\tiny{$L$}}}\nu^*$ for each $n\in \mathbb{N}$. This implies that the same inequality holds for the least upper bound of $\{ \underline{\mu}^n \}_{n\in\mathbb{N}}$; that is, $\underline{\mu}^*\leq^{\text{\tiny{$L$}}} \nu^*$, which completes the proof of the claim. 
\end{proof}

\begin{remark}
\label{remark.minimal.cost.selection} 
In light of Theorem \ref{theorem.convergence.learning.procedure}, a natural question is weather the minimal (resp.\ maximal) MFG solution is associated to the minimal expected cost.
In fact, this relation does not hold in general. Nevertheless, it is easy to see that whenever $f(t,x,\cdot)$ and $g(x,\cdot)$ are increasing (resp.\ decreasing) in $\mu$ for each $(t,x) \in [0,T] \times \mathbb{R}$, the minimal (resp.\ maximal) solution leads to the minimal expected cost and can be approximated via the learning procedure above.
\end{remark}

\begin{remark}
Take  $\mu \in L$ and define the sequence $\mu^0:=\mu$ and $\mu^{n+1}:=R(\mu^{n})$  for $n \in \mathbb{N}$. Following the proof of Theorem \ref{theorem.convergence.learning.procedure} we see that, if $\mu^0 \leq^{L} R(\mu^0)=\mu^1$ (resp. $\mu^0 \geq^{L} R(\mu^0)=\mu^1$), then the sequence $\{ \mu^n \}_{n \in \mathbb{N}}$ is increasing (resp. decreasing) in $(L,\leq^{L})$ and it converges to a MFG equilibrium. In other words, the learning procedure of Theorem \ref{theorem.convergence.learning.procedure} which starts from an arbitrary element converges to a MFG equilibrium whenever the first and the second element of the sequence are comparable.
\end{remark}


\section{Relaxed submodular mean field games}\label{section.realxed.mean.field.games}

In this section we aim at allowing for multiple solutions of the individual optimization problem, and at overcoming the linear-convex setting in the convergence result. This comes with the price of pushing the analysis to a more technical level, by working with a \emph{weak formulation} of the problem and with the so-called \emph{relaxed controls}.

\subsection{The relaxed mean field game} \label{MFGrelaxed}

Let $b,\sigma, f,l,g,U$ be given as in Section \ref{section.submodular.mean.field.game} (see \eqref{assumption.on.b} and \eqref{assumption.costs}), with the additional assumption that $b,f,l,g$ are deterministic and, for simplicity, that $\sigma$ is constant. Let $\mathcal{C}$ denote the set of continuous functions on $[0,T]$. In view of a weak formulation of the problem, the initial value of the dynamics will be described through an initial fixed probability distribution $\nu_0\in \mathcal{P}(\mathbb{R)}$.

Let $\Lambda$ denote the set of \emph{deterministic relaxed controls} on $[0,T] \times	U$; that is, the set of positive measures $\lambda$ on $[0,T] \times U$ such that $\lambda([s,t] \times U)=t-s$ for all $s,t \in [0,T]$ with $s<t$. 
\begin{definition}
\label{def.admissible.relaxed.control}
A 7-tuple $\rho=(\Omega,\mathcal{F},\mathbb{F}, \mathbb{P}, \xi,W,\lambda)$ is said to be an admissible relaxed control if 
\begin{enumerate}
\item $(\Omega,\mathcal{F},\mathbb{F}, \mathbb{P})$ is a filtered probability space satisfying the usual conditions;
\item $\xi$ is an $\mathcal{F}_0$-measurable $\mathbb{R}$-valued random variable (r.v.) such that $\mathbb{P}\circ \xi = \nu_0$;
\item $W=(W_t)_{t \in [0,T]}$ is a standard $(\Omega,\mathcal{F},\mathbb{F}, \mathbb{P})$-Brownian motion;
\item $\lambda$ is a $\Lambda$-valued r.v.\ defined on $\Omega$ such that $\sigma\{ \lambda([0,t]\times E )\,|\,  E \in \mathcal{B}(U) \} \subset \mathcal{F}_t, \ \forall t \in [0,T].$
\end{enumerate}
We denote by $\widetilde{\mathcal{A}}$ the set of admissible relaxed controls.
\end{definition}
As it is shown Lemma 3.2 in \cite{Lacker15}, given $\rho=(\Omega,\mathcal{F},\mathbb{F}, \mathbb{P}, \xi,W,\lambda) \in \widetilde{\mathcal{A}}$, with a slight  abuse of notation, we can define a process $\lambda:\Omega \times [0,T]\to \mathcal{P}(U)$ such that $\lambda(dt,da)=\lambda_t(da)dt$ $\mathbb{P}\otimes dt$-a.e. Through such a  disintegration, we see that the set of admissible controls is naturally included in the set of relaxed controls via the map  $\alpha \mapsto \lambda^\alpha(dt,da):= \delta_{\alpha_t}(da)dt$. 

Furthermore, since $b$ is assumed to satisfy the usual Lipschitz continuity and growth conditions, there exists a unique process $X^\rho:\Omega \times [0,T] \to \mathbb{R}$, solving the system's dynamics equation that now reads as
\begin{equation}
 \label{dynamics.relaxed.controls}
 X_t^\rho = \xi + \int_0^t \int_U b(t,X_t^\rho,a) \lambda_t(da)dt + \sigma W_t, \ t \in [0,T]. 
\end{equation}
Then, for a measurable flow of probability measures $\mu$, we define the cost functional
$$
\widetilde{J}(\rho,\mu):= \mathbb{E}^{\mathbb{P}}\bigg[ \int_0^T\int_U \Big[ f(t,X_t^\rho,\mu_t)+l(t,X_t^\rho,a)  \Big] \lambda_t(da) dt + g(X_T^\rho,\mu_T) \bigg], \ \rho \in \widetilde{\mathcal{A}},
$$
and we say that $\rho \in \widetilde{ \mathcal{A} }$ is an \emph{optimal relaxed control} for the flow of measures $\mu$ if it solves the optimal control problem related to $\mu$; that is, if $ - \infty < \widetilde{J}(\rho,\mu) =\inf \widetilde{J}(\cdot,\mu)$.

We now make the following assumptions, which will be employed in the existence result of Theorem \ref{theorem.relaxed.existence}. 
\begin{assumption}\
\label{assumption.relaxed} 
 \begin{enumerate}
  \item The control space $U$ is compact.
  \item The costs $f(t,\cdot,\mu), \, l(t,\cdot,\cdot)$ and $g(\cdot,\mu)$ are lower semicontinuous in $(x,a)$ for each $(t,\mu) \in [0,T] \times \mathcal{P}(\mathbb{R})$. 
  \item There exist exponents $p'>p\geq 1$ and a constant $K >0$ such that $|\nu_0|^{p'}:=\int_{\mathbb{R}} |y|^{p'} d\nu_0 (y) < \infty$ and such that,  for all $ (t,x,\mu,a) \in [0,T] \times \mathbb{R} \times \mathcal{P}(\mathbb{R}) \times U $, 
  \begin{align*}
  &|g(x,\mu)| \leq K(1+|x|^p + |\mu|^p),\\
 &|f(t,x,\mu)| + |l(t,x,a)| \leq K(1+|x|^p+|\mu|^p),
 \end{align*}
where $|\mu|^p= \int_{\mathbb{R}} |y|^{p} d\mu(y).$
  \item $f$ and $g$ satisfy the Submodularity Assumption \ref{ass.submodularity}. 
 \end{enumerate}
 \end{assumption}

\begin{remark}
Alternatively, as discussed also in Remark \ref{remark.ass.tightness.minimizers}, we can replace (1) in Assumption \ref{assumption.relaxed} by requiring $U$ to be closed and the growth condition \eqref{ass.alternativegrowth} to be satisfied.
\end{remark}

\begin{remark}
\label{remark.compactificatio.method}
Under Assumption \ref{assumption.relaxed}, it is well-known that for each measurable flow $\mu$, $\argmin \widetilde{J}(\cdot, \mu)$ is nonempty. This can be proved using the so-called ``compactification-method'' (see e.g.\ \cite{ElKaroui87} and \cite{HaussmannLepeltier90}, among others). For later use, we now sketch the main argument. Let $(\rho_n)_{ n \in \mathbb{N}}$ be a minimizing sequence for $\widetilde{J}(\cdot, \mu)$, with $\rho_n = (\Omega^n,\mathcal{F}^n, \mathbb{F}^n,\mathbb{P}^n,\xi^n,W^n,\lambda^n)$. Then, since $U$ is compact, thanks to the growth conditions on $b$, the sequence $\mathbb{P}^n \circ (\xi^n,W^n,\lambda^n,X^{\rho_n})$ is tight in $ \mathcal{P}(\mathbb{R} \times \mathcal{C} \times \Lambda \times \mathcal{C})$, so that,  up to a subsequence, $\mathbb{P}^n \circ (\xi^n,W^n, \lambda^n, X^{\rho_n})$  weakly converges to a probability measure $\bar{\mathbb{P}} \in \mathcal{P}(\mathbb{R} \times \mathcal{C} \times \Lambda \times \mathcal{C})$. 
Moreover, through a Skorokhod representation argument, we can find an admissible relaxed control $\rho_*=(\Omega_*,\mathcal{F}_*, \mathbb{F}_*,\mathbb{P}_*,\xi_*,W_*,\lambda_*)$ such that $\bar{\mathbb{P}} = \mathbb{P}_* \circ (\xi_*,W_*, \lambda_*, X^{\rho_*} )$. Finally, the continuity assumptions on the costs together with their polynomial growth, allows to conclude that
$$
\widetilde{J}(\rho_*, \mu) \leq \liminf_n \widetilde{J}(\rho_n, \mu) = \inf \widetilde{J}(\cdot,\mu);
$$
i.e., $\rho_* \in \argmin \widetilde{J}(\cdot,\mu)$. In particular, this argument shows that for any sequence $(\rho_n)_{ n \in \mathbb{N}} \subset \argmin \widetilde{J}(\cdot, \mu)$ we can find  an admissible relaxed control $\rho_*=(\Omega_*,\mathcal{F}_*, \mathbb{F}_*,\mathbb{P}_*,\xi_*,W_*,\lambda_*) \in \argmin \widetilde{J}(\cdot, \mu)$ such that, up to a subsequence, $\mathbb{P}^n \circ X^{\rho_n}$ weakly converges to $\mathbb{P}_* \circ X^{\rho_*}$ in $\mathcal{P}(\mathcal{C})$.  
\end{remark}

The compactness of $U$ and \eqref{assumption.on.b} immediately imply that there exists a constant $M>0$ such that,
     $$\mathbb{E}^\mathbb{P}[|X_t^\rho|^{p'}] \leq M, \quad \forall \, t \in [0,T], \ \rho \in \widetilde{\mathcal{A}}. $$
Hence, Lemma \ref{equivtight} in the Appendix \ref{appendA} allows to find $\mu^{\rm Min},\, \mu^{\rm Max}\in \mathcal P(\R)$ with
$$\mu^{\rm Min} \leq^{\text{st}} \mathbb{P} \circ X_t^{\rho} \leq^{\text{st}} \mu^{\rm Max}, \quad \forall \, t \in [0,T], \ \rho \in \widetilde{\mathcal{A}}.$$
Moreover, as it is shown in Remark \ref{role.p}, we have uniform boundedness of the moments 
\begin{equation}
\label{uniform.bound.for.mu^p}
\sup_{\mu \in [\mu^{\text{Min}},\mu^{\text{Max}}] } |\mu|^q <\infty, \quad \forall \, q < p'. 
\end{equation}

Next, define the set of feasible flows of measures $L$ as the set of all equivalence classes (w.r.t.\ $\pi:=\delta_0 +dt+\delta_T$) of measurable flows $(\mu_t)_{t\in [0,T]}$ with $\mu_t\in [\mu^{\rm Min},\mu^{\rm Max}]$ for $\pi$-almost all $t\in (0,T]$ and $\mu_0=\nu_0$. 
Let $2^L$ be the set of all subset of $L$, and define the best-response-correspondence $\mathcal{R}:L \to 2^L$ by 
\begin{equation}
 \mathcal{R}( \mu) := \big\{ \mathbb{P} \circ X^\rho \, | \, \rho \in \argmin  \widetilde{J}(\cdot,\mu) \big\} \subset L, \quad \mu \in L.
\end{equation} 
We can then give the following definition.
\begin{definition}
The flow of measures $\mu^*$ is a relaxed mean field game solution if $\mu^* \in \mathcal{R}(\mu^*)$.
\end{definition}


\subsection{Existence and approximation of relaxed MFG solutions} 
\label{mainresultsrelaxed}

We now move on to proving the existence and approximation of relaxed mean field game solutions. In order to keep a self-contained but concise analysis, the proofs of the subsequent results will be only sketched whenever their arguments follow along the same lines of those employed in the proofs of Section \ref{section.submodular.mean.field.game}.
\begin{lemma}
\label{lemma.relaxed.bestresponse}
Under Assumption \ref{assumption.relaxed}, the best-response-correspondence satisfies  the following:
 \begin{enumerate}
  \item[(i)] For all $\mu\in L$, we have that  $\inf \mathcal{R}(\mu), \, \sup \mathcal{R}(\mu) \in \mathcal{R}(\mu).$
  \item[(ii)] $\inf \mathcal{R}(\mu)\leq^{L} \inf \mathcal{R}(\overline{\mu})$ and $\sup \mathcal{R}(\mu)\leq^{L} \sup \mathcal{R}(\overline{\mu})$ for all $\mu,\overline{\mu}\in L$ with $\mu\leq^{L} \overline{\mu}$. 
 \end{enumerate}
\end{lemma}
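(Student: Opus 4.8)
The plan is to transfer the two structural facts of Section~\ref{section.submodular.mean.field.game} --- the trajectory lattice of Lemma~\ref{lemma.trajectories.lattice} and the switching-plus-submodularity computation of Lemma~\ref{BRM.increasing} --- to the relaxed, weak setting, and then to combine them with the compactification argument of Remark~\ref{remark.compactificatio.method}. The only genuinely new difficulty is that two optimal relaxed controls $\rho_1,\rho_2$ need not live on the same probability space, while the pathwise $\lor/\land$ construction requires a common driving noise. I would therefore first realize any two given optimal controls on a common filtered space carrying a single pair $(\xi,W)$, gluing their conditional laws given $(\xi,W)$ so as to preserve the compatibility (immersion) property, i.e.\ so that $W$ remains a Brownian motion for the enlarged filtration and both relaxed controls stay adapted. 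Since $\sigma$ is constant, on such a space one has $X^{\rho_1}\lor X^{\rho_2}=\sigma W+(\xi+(\int_0^{\cdot}\bar b^1)\lor(\int_0^{\cdot}\bar b^2))$ with $\bar b^i_s:=\int_U b(s,X^{\rho_i}_s,a)\lambda^i_s(da)$, so the pathwise argument of Lemma~\ref{lemma.trajectories.lattice} applies verbatim and yields admissible relaxed controls $\rho^\vee,\rho^\wedge$ with states $X^{\rho_1}\lor X^{\rho_2}$ and $X^{\rho_1}\land X^{\rho_2}$, obtained by switching between $\lambda^1$ and $\lambda^2$ on the set $B$ as in that proof (the $a$-dependence enters only through $l$, which carries no measure argument).

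Next I would reproduce the computation \eqref{J_J.local}--\eqref{submodularity.of.J} in the relaxed notation. For $\mu\leq^{L}\bar\mu$, the purely algebraic switching identity gives $\widetilde{J}(\rho^\vee,\bar\mu)+\widetilde{J}(\rho^\wedge,\bar\mu)=\widetilde{J}(\rho_1,\bar\mu)+\widetilde{J}(\rho_2,\bar\mu)$ at the fixed measure $\bar\mu$, and the decreasing-differences Assumption~\ref{ass.submodularity} on $f,g$ upgrades this to $0\le \widetilde{J}(\rho^\vee,\bar\mu)-\widetilde{J}(\rho_2,\bar\mu)\le \widetilde{J}(\rho_1,\mu)-\widetilde{J}(\rho^\wedge,\mu)$ whenever $\rho_1\in\argmin\widetilde{J}(\cdot,\mu)$ and $\rho_2\in\argmin\widetilde{J}(\cdot,\bar\mu)$. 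Optimality of $\rho_1,\rho_2$ forces both ends to vanish, so $\rho^\wedge$ is optimal for $\mu$ and $\rho^\vee$ is optimal for $\bar\mu$; this is precisely the strong-set-order relation $\mathcal{R}(\mu)\sqsubseteq\mathcal{R}(\bar\mu)$. Specializing to $\mu=\bar\mu$ shows that $\mathbb{P}\circ(X^{\rho_1}\lor X^{\rho_2})$ and $\mathbb{P}\circ(X^{\rho_1}\land X^{\rho_2})$ again lie in $\mathcal{R}(\mu)$, and since the pointwise $\lor$ (resp.\ $\land$) dominates (resp.\ is dominated by) each factor in $\leq^{\text{st}}$, the set $\mathcal{R}(\mu)$ is both upward and downward directed in $(L,\leq^{L})$.

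For (i) I would use this directedness together with the completeness of $L$ (Lemma~\ref{L.complete}). Fixing $\mu$, the separability of the underlying family of distribution functions lets me choose a countable subset of $\mathcal{R}(\mu)$ whose least upper bound is $\sup\mathcal{R}(\mu)$; replacing its first $n$ elements by their iterated pairwise maxima (which remain in $\mathcal{R}(\mu)$ by directedness) produces an increasing sequence $\nu^n\in\mathcal{R}(\mu)$ with $\nu^n\uparrow\sup\mathcal{R}(\mu)$, converging weakly $\pi$-a.e.\ by Remark~\ref{append.remark.increasing.sequence.weakly.converges}. Writing $\nu^n=\mathbb{P}\circ X^{\rho_n}$ with $\rho_n\in\argmin\widetilde{J}(\cdot,\mu)$ and invoking Remark~\ref{remark.compactificatio.method}, a subsequence of $\mathbb{P}\circ X^{\rho_n}$ converges in $\mathcal{P}(\mathcal{C})$ to $\mathbb{P}\circ X^{\rho_*}$ for some optimal $\rho_*$. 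Matching time-marginals gives $\mathbb{P}\circ X^{\rho_*}_t=\lim_n\nu^n_t=(\sup\mathcal{R}(\mu))_t$ for $\pi$-a.a.\ $t$, hence $\sup\mathcal{R}(\mu)=\mathbb{P}\circ X^{\rho_*}\in\mathcal{R}(\mu)$; the argument for $\inf\mathcal{R}(\mu)$ is symmetric.

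Part (ii) then follows by combining the strong set order with (i). Using (i), choose optimal controls $\rho,\bar\rho$ with $\mathbb{P}\circ X^{\rho}=\sup\mathcal{R}(\mu)$ and $\mathbb{P}\circ X^{\bar\rho}=\sup\mathcal{R}(\bar\mu)$, realize them on a common space, and form $\rho^\vee$; its optimality for $\bar\mu$ gives $\sup\mathcal{R}(\bar\mu)\geq^{L}\mathbb{P}\circ(X^{\rho}\lor X^{\bar\rho})\geq^{L}\mathbb{P}\circ X^{\rho}=\sup\mathcal{R}(\mu)$, and the $\inf$-inequality is obtained analogously from the optimality of $\rho^\wedge$ for $\mu$. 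The main obstacle throughout is the common-space realization: one must glue two weak solutions along their shared $(\xi,W)$-marginal while preserving the Brownian/compatibility structure, since only afterwards are the pathwise lattice operations meaningful. A secondary point is the extraction of the countable increasing cofinal sequence in (i), which rests on the separability of the distribution functions parametrizing $(L,\leq^{L})$.
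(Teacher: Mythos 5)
Your proposal is correct and follows essentially the same route as the paper: realizing the optimal relaxed controls on a common stochastic basis, transferring the trajectory-lattice construction of Lemma~\ref{lemma.trajectories.lattice} and the switching-plus-submodularity computation of Lemma~\ref{BRM.increasing} to relaxed controls, deducing that $\rho^\wedge$ is optimal for $\mu$ and $\rho^\vee$ for $\bar\mu$ (hence directedness of $\mathcal{R}(\mu)$ and the strong set order), and then obtaining (i) via a countable monotone approximating sequence combined with the closure property of $\argmin\widetilde{J}(\cdot,\mu)$ from Remark~\ref{remark.compactificatio.method}. The only cosmetic differences are that you justify the common-space realization and the countable cofinal selection in more detail than the paper (which invokes them via ``without loss of generality'' and the proof of Proposition~\ref{Append.Completeness}, respectively), and that you derive directedness as the special case $\mu=\bar\mu$ of the order comparison rather than treating (i) first.
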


\begin{proof}
We prove the two claims separately.
\vspace{0.15cm}

\emph{Proof of (i)}. Take $\mu \in L$. In order to show that $\inf \mathcal{R}(\mu)\in \mathcal{R}(\mu)$, we recall that, as it shown in the proof of Lemma \ref{Append.Completeness} in the Appendix \ref{appendA}, we can select a sequence of relaxed controls $ (\rho_n)_{n \in \mathbb{N}} \subset \argmin J(\cdot, \mu)$ such that  $\inf \{ \mathbb{P}^n\circ X^{\rho_n} | \, n\in \mathbb{N}\} = \inf \mathcal{R}(\mu)$. Without loss of generality, we can assume that the relaxed controls $\rho_n$ are defined on the same stochastic basis; that is, $\rho^n=(\Omega,\mathcal{F}, \mathbb{F},\mathbb{P},\xi,W,\lambda^n )$ for each $n\in \mathbb{N}$. 

We will now employ an inductive scheme. Let $\rho^1,\,\rho^2$ be the first two elements of the sequence $ (\rho_n)_{n \in \mathbb{N}}$.
As in Lemma \ref{lemma.trajectories.lattice}, we can define two $\Lambda$-valued r.v.'s $\lambda^\vee$ and $\lambda^\wedge$ and two admissible relaxed controls $\rho^\vee=(\Omega,\mathcal{F}, \mathbb{F},\mathbb{P},\xi,W,\lambda^\vee )$ and $\rho^\wedge=(\Omega,\mathcal{F}, \mathbb{F},\mathbb{P},\xi,W,\lambda^\wedge )$ such that $X^{\rho_1} \lor X^{\rho_2} = X^{\rho^\vee}$ and $X^{\rho_1} \land X^{\rho_2} = X^{\rho^\wedge}$. Repeating the same arguments which lead to (\ref{J_J.switch}) in the proof of Lemma \ref{BRM.increasing}, we see that 
$$
0 \leq \widetilde{J}(\rho^\vee, \mu) - \widetilde{J}(\rho_1, \mu) = \widetilde{J}(\rho_2, \mu)-\widetilde{J}(\rho^\wedge,\mu)=0,   
$$
which implies that $\mathbb{P} \circ X^{\rho^\wedge}=\mathbb{P} \circ X^{\rho_1} \land X^{\rho_2}\in \mathcal{R}(\mu)$. Moreover, since $X^{\rho_1} \land X^{\rho_2} = X^{\rho^\wedge}$, we obviously have $ \mathbb{P} \circ X^{\rho^\wedge} \leq^{\text{\tiny{$L$}}} \mathbb{P} \circ X^{\rho^1} \land^{\text{\tiny{$L$}}} \mathbb{P} \circ X^{\rho^2}$. 
Repeating this construction inductively, for each $n\in \mathbb{N}$ we find an admissible relaxed control $\rho^{\wedge n}=(\Omega,\mathcal{F}, \mathbb{F},\mathbb{P},\xi,W,\lambda^{\wedge n} )$ such that $\mathbb{P} \circ X^{\rho^{\wedge n}} \in \mathcal{R}(\mu)$ and $\mathbb{P} \circ X^{\rho^{\wedge n}}  \leq^{\text{\tiny{$L$}}} \mathbb{P} \circ X^{\rho^1} \land^L...\land^L \mathbb{P} \circ X^{\rho^n}$. 
Furthermore, the sequence $\mathbb{P} \circ X^{\rho^{\wedge n}}$ is decreasing in $L$, since for each $n$ we have $ X_t^{\rho^{\wedge n}} = X_t^1 \wedge ... \wedge X_t^n \leq X_t^1 \wedge ... \wedge X_t^{n-1}$ for each $t \in [0,T]$ $\mathbb{P}$-a.s.
Hence,
$$\inf \mathcal{R}(\mu) = \inf \{ \mathbb{P}^n\circ X^{\rho_n} | \, n\in \mathbb{N}\}=\inf \{ \mathbb{P} \circ X^{\rho^{\wedge n }} |\, n \in \mathbb{N} \},$$ which implies that the sequence $\mathbb{P} \circ X^{\rho^{\wedge n}}$ converges weakly to $\inf \mathcal{R}(\mu)$, $\pi$-a.e. Since $(\mathbb{P} \circ X^{\rho^{\wedge n }} )_{n \in \mathbb{N}} \subset \mathcal{R}(\mu)$, by the closure property of $\mathcal{R}(\mu)$ (see Remark \ref{remark.compactificatio.method}), 	we conclude that $\inf \mathcal{R}(\mu) \in \mathcal{R}(\mu)$. 

Analogously, it can be shown that $\sup \mathcal{R}(\mu)\in \mathcal{R}(\mu).$ 
\vspace{0.15cm}

\emph{Proof of (ii)}. Let $\mu,\bar{\mu} \in L$ with $ \mu \leq^{\text{\tiny{$L$}}} \bar{\mu}$ and $\rho,\, \bar{\rho} \in \widetilde{\mathcal{A}}$ with $\rho \in \argmin J(\cdot, \mu)$ and $\bar{\rho} \in \argmin J(\cdot,\bar{\mu})$. With no loss of generality, we can assume that $\rho$ and $\bar{\rho}$ are defined on the same stochastic basis; that is, $\rho=(\Omega,\mathcal{F}, \mathbb{F},\mathbb{P},\xi,W,\lambda )$ and $\rho=(\Omega,\mathcal{F}, \mathbb{F},\mathbb{P},\xi,W,\bar{\lambda} )$. 
As in the proof of claim (i), we can define two $\Lambda$-valued r.v.'s $\lambda^\vee$ and $\lambda^\wedge$ and two admissible relaxed controls $\rho^\vee=(\Omega,\mathcal{F}, \mathbb{F},\mathbb{P},\xi,W,\lambda^\vee )$ and $\rho^\wedge=(\Omega,\mathcal{F}, \mathbb{F},\mathbb{P},\xi,W,\lambda^\wedge )$ such that $X^{\rho} \lor X^{\bar{\rho}} = X^{\rho^\vee}$ and $X^{\rho} \land X^{\bar{\rho}} = X^{\rho^\wedge}$.

Repeating the arguments  which lead to (\ref{submodularity.of.J}) in the proof of Lemma \ref{BRM.increasing}, we exploit the submodularity of the costs and the definitions of $\lambda^\vee$ and $\lambda^\wedge$ to find
\begin{equation*}
\label{submod.J}
 0\leq J(\rho^\vee, \bar{\mu})-J(\bar{\rho}, \bar{\mu})\leq J(\rho^\vee,\mu)-J(\bar{\rho},\mu)= J(\rho,\mu)-J(\rho^\wedge,\mu)\leq 0,
\end{equation*}
where the first and the last inequality hold because of the optimality of $\rho$ and $\bar{\rho}$.  Choosing $\rho$ and $\bar{\rho}$ such that  $\mathbb{P} \circ X^{ \bar{\rho}} = \sup \mathcal{R}(\bar{\mu})$ and 
$\mathbb{P} \circ X^{\rho}=\sup \mathcal{R}(\mu)$ we see that $\sup \mathcal{R}(\mu) \leq^{\text{\tiny{$L$}}} \sup \mathcal{R}(\bar{\mu})$. In the same way, choosing $\rho$ and $\bar{\rho}$ such that  $\mathbb{P} \circ X^{ \bar{\rho}} = \inf \mathcal{R}(\bar{\mu})$ and 
$\mathbb{P} \circ X^{\rho}=\inf \mathcal{R}(\mu)$ we conclude that  $\inf \mathcal{R}(\mu) \leq^{\text{\tiny{$L$}}} \inf \mathcal{R}(\bar{ \mu})$.
\end{proof}

\begin{theorem}
\label{theorem.relaxed.existence}
 Under Assumption \ref{assumption.relaxed}, we have that
\begin{enumerate}
  \item[(i)] The set of mean field game solutions $\mathcal{M}$ is a nonempty lattice and admits a minimal and a maximal element.
  \end{enumerate}
  Assume moreover that the costs $f(t,\cdot,\cdot)$ and $g(\cdot,\cdot)$ are continuous in $(x,\mu)$. Then
  \begin{enumerate}
  \item[(ii)]  For $\underline{\mu}^0:=\inf L$ and $\underline{\mu}^n:= \inf \mathcal{R}(\underline{\mu}^{n-1})$ for $n\in \N$, we have that the learning procedure $( \underline{\mu}^n )_{n \in \mathbb{N}}$ is increasing and it weakly converges to $\inf \mathcal{M}$, $\pi$-a.e.
  \item[(iii)] For $\overline{\mu}^0:=\sup L$ and $\overline{\mu}^n:=\sup \mathcal{R}(\overline{\mu}^{n-1})$ for $n\in \N$, we have that the learning procedure $( \overline{\mu}^n )_{n \in \mathbb{N}}$ is decreasing and it weakly converges to $\sup \mathcal{M}$, $\pi$-a.e.
 \end{enumerate}
\end{theorem}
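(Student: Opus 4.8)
The plan is to reduce the set-valued fixed-point problem to two single-valued monotone maps and apply Tarski's theorem, exactly as in the proof of Theorem~\ref{theorem.existence}, reserving the compactification machinery for the convergence statements (ii)--(iii).

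First I would introduce the two extremal selections $\underline{R}(\mu):=\inf\mathcal{R}(\mu)$ and $\overline{R}(\mu):=\sup\mathcal{R}(\mu)$. By Lemma~\ref{lemma.relaxed.bestresponse}(i) both take values in $\mathcal{R}(\mu)$, so they are genuine maps $L\to L$ whose fixed points are relaxed MFG solutions; by Lemma~\ref{lemma.relaxed.bestresponse}(ii) both are increasing on the complete lattice $(L,\leq^{L})$ (Lemma~\ref{L.complete}). Tarski's theorem \cite{Tarski55} then yields that $\mathrm{Fix}(\underline{R})$ and $\mathrm{Fix}(\overline{R})$ are nonempty complete lattices contained in $\mathcal{M}$, whence $\mathcal{M}\neq\emptyset$. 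To pin down the extremal elements I would use that every $\mu\in\mathcal{M}$ satisfies $\underline{R}(\mu)=\inf\mathcal{R}(\mu)\leq^{L}\mu\leq^{L}\sup\mathcal{R}(\mu)=\overline{R}(\mu)$, so that each solution is a pre-fixed point of $\underline{R}$ and a post-fixed point of $\overline{R}$. By the Knaster--Tarski characterization the least fixed point of $\underline{R}$ lies below every pre-fixed point, hence below every solution, while being itself a solution; this is $\min\mathcal{M}$. Dually, the greatest fixed point of $\overline{R}$ is $\max\mathcal{M}$.

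For the lattice property, given $\mu_1,\mu_2\in\mathcal{M}$ I set $a:=\mu_1\vee^{L}\mu_2$. Since $\mu_i\leq^{L}\overline{R}(\mu_i)$, monotonicity gives $\overline{R}(a)\geq^{L}\overline{R}(\mu_i)\geq^{L}\mu_i$, so $a\leq^{L}\overline{R}(a)$ and $\overline{R}$ maps the interval $[a,\max\mathcal{M}]$ into itself; its least fixed point there is an element of $\mathcal{M}$ dominating $\mu_1,\mu_2$, the candidate join. Dually one builds the meet from the greatest fixed point of $\underline{R}$ on $[\min\mathcal{M},\mu_1\wedge^{L}\mu_2]$. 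The delicate point is that these candidates are the \emph{least} upper (resp.\ \emph{greatest} lower) bounds \emph{within} $\mathcal{M}$: because solutions are post-fixed rather than fixed points of $\overline{R}$, this does not follow from the least-fixed-point property alone. It does follow once one observes that the construction of $\lambda^\vee,\lambda^\wedge$ in the proofs of Lemmas~\ref{lemma.trajectories.lattice} and~\ref{lemma.relaxed.bestresponse} in fact shows that $\mathcal{R}$ is monotone in the \emph{strong set order} (if $\mu\leq^{L}\bar\mu$, $\nu\in\mathcal{R}(\mu)$ and $\bar\nu\in\mathcal{R}(\bar\mu)$, then $\nu\wedge^{L}\bar\nu\in\mathcal{R}(\mu)$ and $\nu\vee^{L}\bar\nu\in\mathcal{R}(\bar\mu)$) and, by the same argument combined with the closure property of Remark~\ref{remark.compactificatio.method}, that each $\mathcal{R}(\mu)$ is a sublattice; this places us in the framework of Zhou-type lattice theorems for increasing correspondences, from which the lattice structure of $\mathcal{M}$ follows. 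I expect verifying this strong-set-order monotonicity together with the sublattice property to be the main order-theoretic obstacle.

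For the convergence claims (ii)--(iii) I would treat $\underline{\mu}^n=\underline{R}^n(\inf L)$ as the Kleene iteration from the bottom. Monotonicity of $\underline{R}$ makes $(\underline{\mu}^n)$ increasing, so by Remark~\ref{append.remark.increasing.sequence.weakly.converges} it converges weakly, $\pi$-a.e., to $\underline{\mu}^*:=\sup_n\underline{\mu}^n$. The crux is to show $\underline{\mu}^*\in\mathcal{M}$: picking optimal relaxed controls $\rho^n\in\argmin\widetilde{J}(\cdot,\underline{\mu}^{n-1})$ with $\mathbb{P}^n\circ X^{\rho^n}=\underline{\mu}^n$ (Lemma~\ref{lemma.relaxed.bestresponse}(i)), the uniform moment bound \eqref{uniform.bound.for.mu^p} gives tightness, and the compactification/Skorokhod argument of Remark~\ref{remark.compactificatio.method} produces a limit control $\rho^*$ with $\mathbb{P}_*\circ X^{\rho^*}=\underline{\mu}^*$; lower semicontinuity of the costs in $(x,a)$, their continuity in $\mu$ (the extra hypothesis), and the uniform integrability coming from the gap $p'>p$ then let me pass to the limit in $\widetilde{J}(\rho^n,\underline{\mu}^{n-1})\leq\widetilde{J}(\rho,\underline{\mu}^{n-1})$ and conclude $\rho^*\in\argmin\widetilde{J}(\cdot,\underline{\mu}^*)$, i.e.\ $\underline{\mu}^*\in\mathcal{R}(\underline{\mu}^*)\subset\mathcal{M}$. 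Minimality is then automatic: since $\inf L\leq^{L}\nu$ for every $\nu\in\mathcal{M}$ and $\nu$ is a pre-fixed point of $\underline{R}$, induction gives $\underline{\mu}^n\leq^{L}\nu$, hence $\underline{\mu}^*\leq^{L}\min\mathcal{M}$, which together with $\underline{\mu}^*\in\mathcal{M}$ forces $\underline{\mu}^*=\min\mathcal{M}$. Claim (iii) is the order-dual, iterating $\overline{R}$ from $\sup L$. I expect this passage to the limit --- showing the monotone limit is a genuine solution under only lower semicontinuity of the costs --- to be the main analytic obstacle.
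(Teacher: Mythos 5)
Your treatment of claims (ii)--(iii), and of the nonemptiness and extremal-element part of claim (i), is correct and essentially coincides with the paper's proof: the paper also obtains the minimal solution as the monotone limit of the iteration of $\inf\mathcal{R}$ started at $\inf L$, identifies the limit as a solution via the tightness/Skorokhod argument of Remark~\ref{remark.compactificatio.method} and passage to the limit in the optimality inequality (this is where the extra continuity hypothesis enters), and proves minimality by the same induction you describe. For claim (i), the paper is shorter than you are: it simply cites Theorem~4.1 in \cite{Vives90}, whose hypotheses are exactly the two conclusions of Lemma~\ref{lemma.relaxed.bestresponse} (the extremal selections $\inf\mathcal{R}$ and $\sup\mathcal{R}$ take values in $\mathcal{R}$ and are increasing); your Tarski/Knaster--Tarski argument for existence and for $\min\mathcal{M}$ and $\max\mathcal{M}$ is a correct self-contained substitute for that part.

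The genuine gap is in your route to the lattice property of $\mathcal{M}$. You assert that the constructions of Lemmas~\ref{lemma.trajectories.lattice} and~\ref{lemma.relaxed.bestresponse} ``in fact show'' that $\mathcal{R}$ is increasing in the strong set order and that each $\mathcal{R}(\mu)$ is a sublattice, so that Zhou-type theorems apply. They show neither. What those constructions give, for $\nu=\mathbb{P}\circ X^{\rho}\in\mathcal{R}(\mu)$ and $\bar\nu=\mathbb{P}\circ X^{\bar\rho}\in\mathcal{R}(\bar\mu)$ coupled on one stochastic basis, is that the law of the \emph{pathwise} maximum $X^{\rho}\vee X^{\bar\rho}$ lies in $\mathcal{R}(\bar\mu)$ (and that of the pathwise minimum in $\mathcal{R}(\mu)$). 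But the law of the pathwise maximum is not the join $\nu_t\vee^{\text{st}}\bar\nu_t$ in the stochastic-dominance lattice: its distribution function is $s\mapsto\mathbb{P}\big[X_t^{\rho}\leq s,\,X_t^{\bar\rho}\leq s\big]$, which is in general strictly smaller than $\mathbb{P}[X_t^{\rho}\leq s]\wedge\mathbb{P}[X_t^{\bar\rho}\leq s]$, with equality only when the two states are a.s.\ ordered --- which is exactly what uniqueness of the optimal pair delivered in the strong formulation (Lemma~\ref{BRM.increasing}) and what is unavailable in the relaxed setting. Thus $\mathbb{P}\circ(X^{\rho}\vee X^{\bar\rho})\geq^{\text{st}}\nu\vee^{\text{st}}\bar\nu$ with strict domination in general, there is no reason why $\nu\vee^{\text{st}}\bar\nu$ is even attainable as the law flow of an admissible relaxed control, and the strong-set-order and sublattice claims remain unproved; the appeal to Zhou's theorem is therefore unjustified. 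This is precisely why the paper does not argue via strong set order: Vives' fixed point theorem is invoked because its hypotheses require only monotone extremal selections lying in the value sets, which is all that Lemma~\ref{lemma.relaxed.bestresponse} provides, and it yields claim (i) as stated.
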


\begin{proof} Claim (i)  follows from Lemma \ref{lemma.relaxed.bestresponse} and Theorem 4.1 in \cite{Vives90}.  

 We now prove only (ii), since the proof of (iii)  is  similar. By Lemma \ref{lemma.relaxed.bestresponse} the sequence $(\underline\mu^n)_{n\in \N}$ is increasing, hence it weakly converges to its least upper bound $ \underline{\mu}_*$, $\pi$-a.e. For each $n\in \mathbb{N}$, let $\rho^n=(\Omega^n,\mathcal{F}^n, \mathbb{F}^n,\mathbb{P}^n,\xi^n,W^n,\lambda^n)$ be an admissible relaxed control such that $\mathbb{P}^n \circ X^{\rho^n} = \inf \mathcal{R}( \underline{\mu}^{n-1})$. As in Remark \ref{remark.compactificatio.method}, the sequence $(\mathbb{P} \circ (\xi^n,W^n,\lambda^n, X^{\rho^n}))_{n \in \mathbb{N}}$ is tight, so that, up to a subsequence, we can assume that the sequence $\mathbb{P}^n \circ (\xi,W, \lambda^n, X^{\rho^n})$ weakly converges to a probability measure $\bar{\mathbb{P}} \in \mathcal{P}(\mathbb{R} \times \mathcal{C} \times \Lambda \times \mathcal{C})$. Moreover, we can find an admissible relaxed control $\rho_*=(\Omega_*,\mathcal{F}_*, \mathbb{F}_*,\mathbb{P}_*,\xi_*,W_*,\lambda_*)$ such that $\bar{\mathbb{P}} = \mathbb{P}_* \circ (\xi_*,W_*, \lambda_*, X^{\rho_*})$, and this implies that $\mu_*=\mathbb{P}_* \circ X^{\rho_*}$. 

 By the optimality of $\rho^n$ for the flow of measures $\underline{\mu}^{n-1}$, we have
 \begin{equation}
 \label{relaxed.learning.procedure}
 \widetilde{J}(\rho^n,\underline{\mu}^{n-1}) \leq \widetilde{J}(\rho, \underline{\mu}^{n-1}), \ \forall \rho \in \widetilde{\mathcal{A}}.
 \end{equation}
 Now, the continuity of the costs $f,\, l$ and $g$, together with their polynomial growth and the uniform integrability condition \eqref{uniform.bound.for.mu^p}, allow to show the continuity of the functional $\widetilde{J}$ along the sequences $(\rho^n,\underline{\mu}^{n-1})_{n\in \N}$ and $(\rho, \underline{\mu}^{n-1})_{n\in \N}$. This in turn enables us to take limits as $n \to \infty$ in (\ref{relaxed.learning.procedure})  and to deduce that  $\widetilde{J}(\rho_*,\underline{\mu}_*) \leq \widetilde{J}(\rho, \underline{\mu}_*)$ for each $ \rho \in \widetilde{\mathcal{A}}$. Hence, $X^{\rho_*}$ is an optimal trajectory for the flow $\underline{\mu}_*$ and, since $\underline{\mu}_*=\mathbb{P}_* \circ X^{\rho_*}$, we have $\underline{\mu}_* \in \mathcal{R}(\underline{\mu}_*)$; that is, $\underline{\mu}_*$ is a mean field game solution.
 
 It remains to show that $\underline{\mu}_*= \inf \mathcal{M}$. Let $\nu \in \mathcal{M}$. By definition, we have $\underline{{\mu}}^0 = \inf L \leq^{\text{\tiny{$L$}}} \nu$. Since $\inf \mathcal{R}$ is increasing by (ii) in Lemma \ref{lemma.relaxed.bestresponse}, $\underline{\mu}^1 = \inf \mathcal{R}(\underline{\mu}^0) \leq^{\text{\tiny{$L$}}} \inf \mathcal{R}(\nu) \leq^{\text{\tiny{$L$}}} \nu$, where the last inequality follows from $\nu \in \mathcal{R}(\nu)$. By induction, we deduce that $\underline{\mu}^n \leq^{\text{\tiny{$L$}}} \nu$ for each $n \in \mathbb{N}$. Recalling that $\underline{\mu}_* = \sup \{ \underline{\mu}^n | n \in \mathbb{N} \}$, we conclude that $\underline{\mu}_* \leq^{\text{\tiny{$L$}}} \nu$, which completes the proof.
\end{proof}


\section{Concluding remarks and further extensions} \label{concludingremarks}

In the following, we provide some comments on our assumptions and further extensions of the techniques elaborated in the previous sections.

\subsection{On linear-quadratic MFG}

Assumption \ref{ass.submodularity} is fulfilled in the linear-quadratic case
\begin{align*}
& b(t,x,a)= c_t + p_t x + q_t a,\\
& f(t,x,\mu) + l(t,x,a)=\frac{1}{2} n_t a^2 + \frac{1}{2}(m_t x + \widehat{m}_t \langle \text{id}, \mu \rangle)^2,\\ 
&g(x,\mu)=\frac{1}{2}(h_t x + \widehat{h}_t  \langle \text{id}, \mu \rangle)^2,
\end{align*}
where $\text{id}(y)=y$, and for deterministic continuous functions $c,p,q,n,m,\widehat{m},h$ and $\widehat{h}$ such that $\inf_{t \in [0,T]} q_t > 0 $, $\inf_{t \in [0,T]} n_t > 0 $, $n_t \widehat{m}_t \leq 0$ and  $h_t \widehat{h}_t \leq 0$ for each $t \in [0,T]$.

However, the tightness condition (2) in Assumption \ref{ass.main} is not satisfied unless we consider a compact control set $U$. In fact, when $U$ is not compact, there is a counterexample in Section 7 of \cite{Lacker15}, which shows that a mean field game solution may not exist.

Nevertheless, our approach allows to treat non-standard linear-quadratic mean field games, as for example the one considered in Subsection 2.2 in \cite{DelarueFT19}.

\subsection{On a geometric dynamics}\label{geosec}

Our results still hold true if we replace \eqref{dynamics}  with a dynamics of the geometric form
\begin{equation}
\label{geodynamics} 
dX_t= b(t,X_t, \alpha_t) X_t dt +\sigma_t X_t dW_t,\quad \quad t \in [0,T], \quad X_0=\xi,\\
\end{equation}
for some square-integrable positive r.v.\ $\xi$, a bounded drift $b$ and a bounded stochastic process $\sigma$. Indeed, for each square-integrable process $\alpha$ there exists a unique strong solution $X^\alpha$ to the latter SDE, and classical estimates show that there exists a constant $M>0$ such that
\[
\sup_{t \in [0,T]} \mathbb{E} [|X_t^\alpha|^2] \leq M; 
\]
hence, the tightness condition in Assumption \ref{ass.main} is satisfied.
Moreover, the solution to \eqref{geodynamics} can be represented as
$$X^{\alpha}_t= \xi \exp\Big( \int_0^t \big(b(s,X^{\alpha}_s, \alpha_s) - \frac{1}{2}\sigma^2_s \big) ds + \int_0^t \sigma_s dW_s\Big), \quad t \in [0,T],$$
and the mapping $x \mapsto \exp(x)$ is monotone. Hence, since $\xi$ is positive, for any couple of admissible controls $\alpha,$ $\bar{\alpha}$, we have that for each $t\in[0,T]$ $\mathbb{P}$-a.s.
$$X^{\bar{\alpha}}_t \geq X^{\alpha}_t \qquad \text{if and only if} \qquad \int_0^t b(s,X^{\bar{\alpha}}_s, \bar{\alpha}_s)ds \geq \int_0^t b(s,X^{\alpha}_s, \alpha_s) ds.$$
The latter property allows to repeat all the arguments employed in the proof of Lemma \ref{lemma.trajectories.lattice} and (mutatis mutandis) to carry on the analysis that lead to the existence results of Theorems \ref{theorem.existence} and \ref{theorem.relaxed.existence}.

\subsection{On mean field dependent dynamics}\label{meanfielddep}
For a suitable choice of the costs $f$, $g$ and $l$, Theorem \ref{theorem.existence} still holds if we have a ``sufficiently simple'' mean field dependence in the dynamics of the system. For the sake of illustration, we discuss here two example.

Let $U$ be a compact subset of $\mathbb{R}$. For any admissible process $\alpha$ and any measurable flow of probability measures $\mu$, consider a state process given by 
\begin{equation}
\label{geometric.browninan.motion}
dX_t = X_t ( \alpha_t + m(\mu_t)) dt + \sigma X_t dW_t, \quad t \in [0,T], \quad X_0 =\xi, 
\end{equation}
where $\xi$ is a positive square-integrable r.v.\ and $m\colon\mathcal{P}(\mathbb{R}) \to \mathbb{R}$ is a bounded function which is measurable with respect to the Borel $\sigma$-algebra associated to the topology of weak convergence of probability measures. Assume moreover that $m$ is increasing with respect to the first order stochastic dominance.

Notice that, for each measurable flow $\mu$ and for each admissible $\alpha$, the SDE \eqref{geometric.browninan.motion} admits the explicit solution
$$
X_t^{\alpha,\mu} =  E_t(\alpha) M_t(\mu), 
$$
where
$$
E_t(\alpha) := \xi \exp \bigg( \int_0^t \big(\alpha_s - \frac{\sigma^2}{2} \big) ds +  \sigma W_t \bigg) \quad \text{and} \quad M_t(\mu):= 
\exp \bigg( \int_0^t m(\mu_s)ds \bigg).
$$
Since $U$ is compact and $m$ is bounded, we can find a constant $K>0$ which is independent of $\mu$, such that
$$
\sup_{t\in [0,T]} \mathbb{E}[|X_t^{\alpha,\mu}|^2] \leq K. 
$$
The latter implies the tightness condition in Assumption \ref{ass.main}. As in Subsection \ref{latticestructure}, this allows us to define a set $L$ of feasible flows of measures, and to show that $(L, \leq^{\text{\tiny{$L$}}})$ is a complete lattice.

Given $\mu \in L$ and two admissible controls $\alpha$ and $\bar{\alpha}$, as in Lemma \ref{lemma.trajectories.lattice} we can construct $\alpha^\vee$ and $\alpha^\wedge$ such that $X_t^{\alpha,\mu} \vee X_t^{\bar{\alpha},\mu} = X_t^{\alpha^\vee,\mu}$ and $X_t^{\alpha,\mu} \wedge X_t^{\bar{\alpha},\mu} = X_t^{\alpha^\wedge,\mu}$. 
Moreover, due to the particular structure of \eqref{geometric.browninan.motion}, the construction of $\alpha^\vee$ and $\alpha^\wedge$ does not depend on $\mu$.

Consider now cost functions $l(t,x,a)=a^2/2$ and $f(t,x,\mu)= x\psi(\mu)$, for a measureable function $\psi\colon \mathcal{P}(\mathbb{R}) \to \mathbb{R}_{-}$ which is decreasing w.r.t.\ the first order stochastic dominance. With such a choice of the costs, the functional $J$ is strictly convex w.r.t.\ $\alpha$. Hence, for each $\mu \in L $, there exists a unique minimizer $\alpha$ of $J(\cdot, \mu)$ (see, e.g., Theorem 5.2 in \cite{Yong&Zhou99}). We then have the following result.

\begin{lemma}\label{lemma41}
The best-response-map $R\colon L\to L$ is increasing.
\end{lemma}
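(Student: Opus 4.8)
The plan is to mimic the proof of Lemma~\ref{BRM.increasing}, the only new feature being that the state depends on the measure through the positive factor $M_t(\mu)$. I fix $\mu,\bar\mu\in L$ with $\mu\leq^{\text{\tiny{$L$}}}\bar\mu$ and let $\alpha,\bar\alpha$ be the unique minimizers of $J(\cdot,\mu)$ and $J(\cdot,\bar\mu)$. As recalled before the statement, I can construct controls $\alpha^\vee,\alpha^\wedge$ with $X^{\alpha,\nu}\vee X^{\bar\alpha,\nu}=X^{\alpha^\vee,\nu}$ and $X^{\alpha,\nu}\wedge X^{\bar\alpha,\nu}=X^{\alpha^\wedge,\nu}$ for every $\nu\in L$. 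The decisive point is that, since $X_t^{\gamma,\nu}=E_t(\gamma)M_t(\nu)$ with $M_t(\nu)>0$, the event $\{X_t^{\alpha,\nu}\geq X_t^{\bar\alpha,\nu}\}$ coincides with $\{E_t(\alpha)\geq E_t(\bar\alpha)\}$ and is thus independent of $\nu$; calling this switching set $B:=\{E_s(\alpha)>E_s(\bar\alpha)\}\cup\{E_s(\alpha)=E_s(\bar\alpha),\ \alpha_s\geq\bar\alpha_s\}$, I have $\alpha^\wedge=\bar\alpha\mathds{1}_B+\alpha\mathds{1}_{B^c}$ and $X^{\alpha^\wedge,\nu}=M(\nu)\big(E(\alpha)\wedge E(\bar\alpha)\big)$.

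First I would reproduce the bookkeeping leading to~\eqref{J_J.local}. Admissibility of $\alpha^\vee$ and optimality of $\bar\alpha$ give $0\leq J(\alpha^\vee,\bar\mu)-J(\bar\alpha,\bar\mu)$; inserting $1=\mathds{1}_B+\mathds{1}_{B^c}$ and using $f(t,x,\mu)=x\psi(\mu)$, $l(t,x,a)=a^2/2$ exactly as in Lemma~\ref{BRM.increasing}, with the running-cost part untouched since $l$ depends neither on $x$ nor on $\mu$, this becomes
\[
J(\alpha^\vee,\bar\mu)-J(\bar\alpha,\bar\mu)=\mathbb{E}\Big[\int_0^T\big(X_t^{\alpha,\bar\mu}-X_t^{\alpha^\wedge,\bar\mu}\big)\psi(\bar\mu_t)\,dt\Big]+\frac{1}{2}\mathbb{E}\Big[\int_0^T\big(\alpha_t^2-(\alpha_t^\wedge)^2\big)\,dt\Big].
\]

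The step I expect to be the main obstacle is dominating this $\bar\mu$-expression by its $\mu$-analogue, which here simultaneously encodes the submodularity of $f$ and the monotonicity of the dynamics. I would factor $X_t^{\alpha,\nu}-X_t^{\alpha^\wedge,\nu}=M_t(\nu)\big(E_t(\alpha)-E_t(\alpha)\wedge E_t(\bar\alpha)\big)$, whose second factor is nonnegative and $\nu$-free, and then verify the scalar inequality $M_t(\bar\mu)\psi(\bar\mu_t)\leq M_t(\mu)\psi(\mu_t)\leq 0$. This holds because $m$ increasing for $\leq^{\text{st}}$ yields $0<M_t(\mu)\leq M_t(\bar\mu)$, while $\psi\leq 0$ decreasing yields $\psi(\bar\mu_t)\leq\psi(\mu_t)\leq 0$, so multiplying the ordering of the $M$'s by the nonpositive $\psi(\mu_t)$ and the ordering of the $\psi$'s by the positive $M_t(\bar\mu)$ gives the claim. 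Multiplying by the nonnegative factor, integrating, and recalling that the $l$-part is measure-independent, I arrive at
\[
J(\alpha^\vee,\bar\mu)-J(\bar\alpha,\bar\mu)\leq\mathbb{E}\Big[\int_0^T\big(X_t^{\alpha,\mu}-X_t^{\alpha^\wedge,\mu}\big)\psi(\mu_t)\,dt\Big]+\frac{1}{2}\mathbb{E}\Big[\int_0^T\big(\alpha_t^2-(\alpha_t^\wedge)^2\big)\,dt\Big]=J(\alpha,\mu)-J(\alpha^\wedge,\mu).
\]

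To conclude, optimality of $\alpha$ for $\mu$ forces $J(\alpha,\mu)-J(\alpha^\wedge,\mu)\leq 0$, so the whole chain collapses to equalities; in particular $\alpha^\wedge$ minimizes $J(\cdot,\mu)$, and uniqueness of the minimizer yields $X^{\alpha^\wedge,\mu}=X^{\alpha,\mu}$, that is $X_t^{\alpha,\mu}\leq X_t^{\bar\alpha,\mu}$ for all $t$, $\mathbb{P}$-a.s. Finally, since $E_t(\bar\alpha)\geq 0$ and $M_t(\mu)\leq M_t(\bar\mu)$,
\[
X_t^{\alpha,\mu}\leq X_t^{\bar\alpha,\mu}=E_t(\bar\alpha)M_t(\mu)\leq E_t(\bar\alpha)M_t(\bar\mu)=X_t^{\bar\alpha,\bar\mu},
\]
which is precisely $R(\mu)\leq^{\text{\tiny{$L$}}}R(\bar\mu)$, completing the plan.
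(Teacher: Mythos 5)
Your proof is correct and takes essentially the same approach as the paper's: both rest on the factorization $X_t^{\gamma,\nu}=E_t(\gamma)M_t(\nu)$, the resulting $\nu$-independence of the switching construction, the scalar inequality $M_t(\bar\mu)\psi(\bar\mu_t)\leq M_t(\mu)\psi(\mu_t)\leq 0$ (combining $m$ increasing with $\psi$ nonpositive and decreasing), optimality plus uniqueness of the minimizer, and the final estimate $X_t^{\alpha,\mu}\leq E_t(\bar\alpha)M_t(\mu)\leq E_t(\bar\alpha)M_t(\bar\mu)=X_t^{\bar\alpha,\bar\mu}$. The only, immaterial, difference is that you run the chain of inequalities so as to conclude that $\alpha^\wedge$ is optimal for $\mu$ (mimicking Lemma \ref{BRM.increasing}), whereas the paper concludes that $\alpha^\vee$ is optimal for $\bar\mu$; these are mirror images of the same argument.
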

\begin{proof}
Take $\mu,\, \bar{\mu} \in L$ with $\mu \leq^{\text{\tiny{$L$}}} \bar{\mu}$. Let $\alpha \in \argmin J(\cdot, \mu)$ and $\bar{\alpha} \in \argmin J(\cdot, \bar{\mu})$. 
 
 Similar to Lemma \ref{BRM.increasing}, we see that
 \begin{align*}
 0  \geq J(\alpha^\vee, \mu) &- J(\bar{\alpha}, \mu) = \mathbb{E} \bigg[ \int _0^T \bigg( \frac{(\alpha_t^\vee)^2}{2} - \frac{\bar{\alpha}_t^2}{2} + (X_t^{\alpha^\vee,\mu} - X_t^{\bar{\alpha},\mu} ) \psi(\mu_t) \bigg) dt \bigg] \\
 & = \mathbb{E} \bigg[ \int _0^T \bigg( \frac{(\alpha_t^\vee)^2}{2} - \frac{\bar{\alpha}_t^2}{2} + (E_t(\alpha^\vee) - E_t(\bar{\alpha}) ) M_t(\mu) \psi(\mu_t) \bigg) dt \bigg] \\
 & \geq \mathbb{E} \bigg[ \int _0^T \bigg( \frac{(\alpha_t^\vee)^2}{2} - \frac{\bar{\alpha}_t^2}{2} + (E_t(\alpha^\vee) - E_t(\bar{\alpha}) ) M_t(\bar{\mu}) \psi(\bar{\mu}_t) \bigg) dt \bigg] \\
& = \mathbb{E} \bigg[ \int _0^T \bigg( \frac{(\alpha_t^\vee)^2}{2} - \frac{\bar{\alpha}_t^2}{2} + (X_t^{\alpha^\vee,\bar{\mu}} - X_t^{\bar{\alpha},\bar{\mu}} ) \psi(\bar{\mu}_t) \bigg) dt \bigg]
= J(\alpha^\vee, \bar{\mu}) - J(\bar{\alpha}, \bar{\mu}), 
 \end{align*}
 where we have exploited the particular structure of the dynamics and the fact that $\psi$ is negative and decreasing. Hence $\alpha^\vee \in \argmin J(\cdot, \bar{\mu})$, which, by uniqueness, implies that $\alpha^\vee = \bar{\alpha}$. This in turn implies that $E_t(\alpha^\vee)=E_t(\alpha) \vee E_t(\bar{\alpha})= E_t(\bar{\alpha})$. Hence, $E_t(\alpha) \leq E_t(\bar{\alpha})$ and, by monotonicity of $m$, we find $X_t^{\alpha,\mu} =E_t(\alpha)M_t(\mu) \leq E_t(\bar{\alpha})M_t(\bar{\mu}) = X_t^{\bar{\alpha},\bar{\mu}}$ and $R(\mu)= \mathbb{P} \circ X_t^{\alpha,\mu} \leq^{\text{\tiny{$L$}}} \mathbb{P} \circ X_t^{\bar{\alpha},\bar{\mu}} = R(\bar{\mu})$, which completes the proof. 
 \end{proof}

Thanks to Lemma \ref{lemma41}, we can invoke Tarski's fixed point theorem in order to deduce that the set of mean field game equilibria is a nonempty and complete lattice.

\begin{remark}
Analogous statements still hold if we consider a controlled Ornstein–Uhlenbeck process with mean field term in the dynamics; that is, if the state process is given by
\begin{equation}\label{oumeanfielddep}
dX_t = \big(\kappa X_t+\alpha_t + m(\mu_t)\big)dt + \sigma dW_t, \quad t\in [0,T], \quad X_0 = \xi, \quad \text{with }\kappa\in \R\text{ and } \sigma \geq 0,
\end{equation}
for a measurable bounded increasing function $m:\mathcal{P}(\mathbb{R}) \to \mathbb{R}.$
\end{remark}


\subsection{On a class of MFGs with common noise}
\label{sec:commonnoise}

Our approach allows also to treat a class of submodular mean field games with common noise, in which the representative player interacts with the population through the conditional mean of its state given the common noise. We refer to the recent works \cite{DelarueFT19} and \cite{tchuendom} for a related set up. In the following, we provide the main ingredients of the setting and we show that the set of solutions to the considered class of MFGs with common noise is a nonempty complete lattice.

Let $(W_t)_{t\in [0,T]}$ and $ (B_t)_{t \in [0,T] }$ be two independent Brownian motions on a complete filtered probability space $(\Omega,\mathcal F,(\mathcal F_t)_{t\in [0,T]},\mathbb P)$. Let $\xi\in L^2(\Omega,\mathcal F_0,\mathbb P)$, $\sigma\geq 0$, and $\sigma_0>0$. For each $\alpha\in \mathcal A$ (see the beginning of Subsection \ref{MFGproblem}), consider a dynamics of the system given by
\begin{equation}\label{noisesde}
dX_t = b(t,X_t,\alpha_t) dt + \sigma dW_t + \sigma_0 dB_t, \quad t\in [0,T],  \quad X_0 = \xi, 
\end{equation}
for some bounded measurable function $b$ satisfying the first requirement in \eqref{assumption.on.b}. Here, the Brownian motion $B$ stands for the common noise, while $W$ represents the idiosyncratic noises affecting the state processes in the pre-limit $N$-player game. Notice that, since $b$ is bounded, the solution $X^\alpha$ to the SDE \eqref{noisesde} satisfies ($\mathbb P$-a.s.) the estimate
$$
|X_t^\alpha| \leq |\xi| +t \| b \|_\infty + \sigma |W_t| + \sigma_0 |B_t| =: Y_t\quad \text{for all }t\in [0,T]\text{ and }\alpha\in \mathcal A,
$$
with the process $(Y_t)_{t\in [0,T]}$ belonging to $L^2(\Omega\times[0,T])$.

Let $\mathbb{F}^B=(\mathcal{F}_t^B)$ be the natural filtration generated by $B$ augmented by all $\P$-null sets, and define $L$ to be the set of all real-valued $\mathbb F^B$-progressively measurable processes $\mu=(\mu_t)_{t\in [0,T]}$ such that $|\mu_t| \leq Y_t $ $\mathbb{P}$-a.s., for each $t \in [0,T]$. Then, for any given $\mu\in L$, consider the optimization problem $\inf J(\cdot ,\mu)$ with $J$ as in \eqref{cost.functional} (with appropriately measurable functions $f\colon \Omega\times [0,T]\times \R^2\to \R$, $g\colon \Omega\times \R^2\to \R$ and $l$ as before), suppose that a unique optimal pair $(X^\mu,\alpha^\mu)$ exists, and introduce the map $R\colon L \to L$ defined by
 \[
  R(\mu)_t := \mathbb{E}[X_t^\mu | \mathcal{F}_T^B]\quad \text{for }t\in [0,T].
 \]
 Notice that $R(\mu)$ is $\mathbb F^B$-adapted (see Remark 1 in \cite{tchuendom}) and continuous in $t$, and therefore $\mathbb F^B$-progressively measurable.
 \begin{definition}
 A process $\mu \in L$ is a MFG solution to the MFG with common noise if $$\mu_t= \mathbb{E}[X_t^\mu | \mathcal{F}_T^B] \quad \text{for each }t\in [0,T].$$
 \end{definition}
 
 Consider on $L$ the order relation given by $\mu \leq \bar{\mu}$ if and only if $\mu_t \leq \bar{\mu}_t$ $\mathbb{P}\otimes dt$-a.e. 
 Since $L$ is a bounded subset of the Dedekind complete lattice $L^2(\Omega \times [0,T])$, it is a complete lattice.
 Moreover, as in Remark \ref{remark.BRM.increasing}, for $\bar{\mu},\mu \in L$ with $\mu \leq \bar{\mu}$ we have that $X_t^{{\mu}} \leq X_t^{\bar{\mu}}$ for each $t\in [0,T]$, $\mathbb{P}$-a.s., and hence
 $$
 R(\mu)_t =\mathbb{E}[X_t^\mu| \mathcal{F}_T^B] \leq \mathbb{E}[X_t^{\bar{\mu}}| \mathcal{F}_T^B] = R(\bar{\mu})_t,\quad  \mathbb{P}\otimes dt\text{-a.e.},
 $$
 which implies that $R\colon L \to L$ is increasing. Once more, using Tarski's fixed point theorem, we have proved the following result.
 
 \begin{theorem}\label{excommnoise}
 The set of solutions of the MFG with common noise is a nonempty complete lattice.
 \end{theorem}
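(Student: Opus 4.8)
The plan is to recognise this as one more instance of the lattice-theoretic existence scheme behind Theorem~\ref{theorem.existence}: I would verify the two hypotheses of Tarski's fixed point theorem for the map $R$ on $(L,\leq)$ and then simply read off the conclusion. Concretely, the two things to establish are that $(L,\leq)$ is a complete lattice and that $R\colon L\to L$ is increasing. Once both are in place, Theorem~1 in \cite{Tarski55} guarantees that the set of fixed points of $R$ is a nonempty complete lattice, and by the definition of an MFG solution with common noise this fixed-point set is exactly the set of solutions; this yields the claim.

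For the completeness of the lattice I would argue that $L$ is an order-bounded subset of $L^2(\Omega\times[0,T])$: every $\mu\in L$ satisfies $|\mu_t|\leq Y_t$ with $Y\in L^2(\Omega\times[0,T])$, and the order on $L$ is the $\mathbb{P}\otimes dt$-pointwise order inherited from $L^2(\Omega\times[0,T])$. Since $L^2(\Omega\times[0,T])$ is Dedekind complete, suprema and infima of arbitrary families exist there; the only point needing justification is that these lattice operations preserve both $\mathbb{F}^B$-progressive measurability and the domination by $Y$, so that the resulting bounds remain inside $L$. This is the content of the discussion preceding the statement and is the $L^2$-analogue of Lemma~\ref{L.complete}. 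In the same spirit, well-definedness $R(L)\subseteq L$ reduces to the measurability already recorded in the text (namely that $R(\mu)$ is $\mathbb{F}^B$-adapted and $t$-continuous, hence $\mathbb{F}^B$-progressively measurable) together with the order-boundedness of the conditional means $\mathbb{E}[X^\mu_t\mid\mathcal{F}^B_T]$ in $L^2(\Omega\times[0,T])$.

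The monotonicity of $R$ is the genuine input, and I would obtain it exactly as in the proof of Lemma~\ref{BRM.increasing}. Given $\mu\leq\bar\mu$, the submodularity Assumption~\ref{ass.submodularity} together with the trajectory-lattice construction of Lemma~\ref{lemma.trajectories.lattice}, adapted to the dynamics \eqref{noisesde}, yields the pathwise comparison $X^\mu_t\leq X^{\bar\mu}_t$ for all $t$, $\mathbb{P}$-a.s.; this is precisely the statement recorded in Remark~\ref{remark.BRM.increasing}. Applying the conditional expectation $\mathbb{E}[\,\cdot\mid\mathcal{F}^B_T]$, which is order-preserving, then gives $R(\mu)_t=\mathbb{E}[X^\mu_t\mid\mathcal{F}^B_T]\leq\mathbb{E}[X^{\bar\mu}_t\mid\mathcal{F}^B_T]=R(\bar\mu)_t$, $\mathbb{P}\otimes dt$-a.e., i.e.\ $R(\mu)\leq R(\bar\mu)$.

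The main obstacle is not the fixed-point step, which is immediate once the two hypotheses hold, but the transfer of the comparison principle to the common-noise setting: one must check that the construction of $\alpha^\vee,\alpha^\wedge$ and the submodularity estimate \eqref{submodularity.of.J} go through when the interaction enters the costs through the $\mathbb{F}^B$-measurable flow $\mu$ and the state is driven by both $W$ and $B$. Since the diffusion part is uncontrolled and common to both trajectories, the pathwise argument of Lemma~\ref{lemma.trajectories.lattice} is unaffected by the presence of the two noises, and the decreasing-differences property of $f$ and $g$ in $(x,\mu)$ is exactly what drives \eqref{submodularity.of.J}. Hence I expect this verification to be routine, and the theorem to follow directly from Tarski's theorem.
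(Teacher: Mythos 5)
Your proposal follows essentially the same route as the paper's own proof: completeness of $L$ as an order-bounded subset of the Dedekind complete lattice $L^2(\Omega\times[0,T])$, monotonicity of $R$ via the pathwise comparison of Remark~\ref{remark.BRM.increasing} followed by the order-preserving conditional expectation $\mathbb{E}[\,\cdot\,|\mathcal{F}_T^B]$, and then Tarski's fixed point theorem. The points you flag as needing verification (preservation of measurability and of the bound by $Y$ under lattice operations, and the transfer of the Lemma~\ref{lemma.trajectories.lattice} construction to the two-noise dynamics) are precisely the ones the paper treats as routine, so there is no substantive difference between the two arguments.
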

 
 \begin{remark}
  Notice that the crucial step in order to obtain Theorem \ref{excommnoise} is the inequality $X_t^{{\mu}} \leq X_t^{\bar{\mu}}$, for each $t\in [0,T]$, whenever $\mu \leq \bar{\mu}$. Following the arguments developed in Subsection \ref{meanfielddep} for MFG without common noise, a similar relation can be established also in the case of mean field dependent dynamics as in \eqref{geometric.browninan.motion} or \eqref{oumeanfielddep} with an additional common noise term $\sigma_0dB_t$. Note that the latter mean-reverting dynamics is exactly the one considered in \cite{DelarueFT19} and \cite{tchuendom}.
 \end{remark}
 

\appendix

\section{Some results on first order stochastic dominance}\label{appendA}

In this section, we derive some technical results concerning the first order stochastic dominance introduced in Subsection \ref{latticestructure}. As in Subsection \ref{latticestructure}, we identify the set of probability measures $\mathcal{P}(\mathbb{R})$ by the set of distribution functions on $\mathbb{R}$, setting $\mu(s):=\mu(-\infty,s]$ for each $s\in \mathbb{R}$ and $\mu\in \mathcal P(\R)$. On $\mathcal{P}(\mathbb{R})$ we then consider the lattice ordering of first order stochastic dominance given by \eqref{fistorderstochdom} and \eqref{fistorderstochdomminmax}.

\begin{remark}\label{minimaandmaxima}\
\begin{enumerate}
 \item[a)] Notice that by identifying $\mu$ by its distribution function, $\mathcal P(\R)$ coincides with the set of all nondecreasing right continuous functions $F\colon \R\to [0,1]$ with $\lim_{s\to -\infty}F(s)=0$ and $\lim_{s\to \infty} F(s)=1$. Moreover, we would like to recall that the weak topology is metrizable and that the weak convergence coincides with the pointwise convergence of distribution functions at every continuity point, i.e. $\mu_n\to \mu$ if and only if
 \[
 \mu_n(s)\to \mu(s)\quad\text{as }n\to \infty\quad \text{for every continuity point }s\in \R\text{ of }\mu.
 \]
 Therefore, the weak convergence behaves well with the pointwise lattice operations $\vee^{\rm st}$ and $\wedge^{\rm st}$. In particular, the maps $(\mu,\nu)\mapsto \mu \vee^{\rm st} \nu $ and $(\mu,\nu)\mapsto \mu \wedge^{\rm st} \nu $ are continuous $\mathcal P(\R)\times \mathcal P(\R)\to \mathcal P(\R)$.
 \item[b)] Recall that a nondecreasing function $\R\to \R$ is right continuous if and only if it is upper semicontinuous (usc). Hence, for a sequence $(\mu^n)_{n\in \N}\in \mathcal P(\R)$ which is bounded above, the supremum $\sup_{n\in \N}\mu^n$ is exactly the pointwise infimum of the distribution functions $( \mu^n )_{n \in \mathbb{N}}$.
 \item[c)] For a nondecreasing function $F\colon \R\to \R$, we define its usc-envelope $F^*\colon \R\to \R$ by
\[
 F^*(s):=\inf_{\delta >0}F(s+\delta)\quad \text{for all }s\in \R.
\]
Notice that $F(s)\leq F^*(s)\leq F(s+\varepsilon)$ for all $s\in \R$ and $\varepsilon >0$. Intuitively speaking, $F^*$ is the right continuous version of $F$. That is, $F^*$ differs from $F$ only at discontinuity points of $F$. For a sequence $(\mu^n)_{n\in \N}\in \mathcal P(\R)$ which is bounded below, the infimum $\inf_{n\in \N}\mu^n$ is then given by the usc-envelope of the pointwise supremum of the distribution functions $(\mu^n)_{n\in \N}$. That is, one has to modify the pointwise supremum at all its discontinuity points in order to be right continuous.
\item[d)] Combining the previous remarks, leads to the following insight: If $(\mu^n)_{n\in \N}\subset \mathcal P(\R)$ is a bounded and nondecreasing or nonincreasing sequence, then $(\mu^n)_{n\in \N}$ converges weakly to its supremum or infimum, respectively.
\end{enumerate}
\end{remark}

\begin{lemma}\label{equivtight}
 Let $K\subset \mathcal P(\R)$ and $\psi\colon [0,\infty)\to [0,\infty)$ be continuous and strictly increasing with $\psi(s)\to \infty$ as $s\to \infty$ and
 \[
  \sup_{\mu\in M}\int_\R \psi(|x|) d\mu(x)<\infty.
 \]
 Then, there exist $\mu^{\rm Min},\mu^{\rm Max}\in \mathcal P(\R)$ with $\mu^{\rm Min}\leq^{\rm st} \mu\leq^{\rm st}\mu^{\rm Max}$ for all $\mu\in K$.
\end{lemma}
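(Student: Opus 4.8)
The plan is to build $\mu^{\rm Max}$ and $\mu^{\rm Min}$ explicitly as the pointwise extremal distribution functions of the family $K$, using the integrability hypothesis only to rule out an escape of mass to infinity. Identifying each $\mu\in K$ with its distribution function $s\mapsto \mu(s)=\mu(-\infty,s]$, I set
\[
 F(s):=\inf_{\mu\in K}\mu(s)\quad\text{and}\quad G(s):=\sup_{\mu\in K}\mu(s),\qquad s\in \R.
\]
Both functions are nondecreasing and $[0,1]$-valued, being pointwise infima and suprema of such functions. In the order $\leq^{\text{st}}$ these are precisely the distribution functions of the prospective least upper bound and greatest lower bound of $K$: recall that $\mu\leq^{\text{st}}\nu$ means $\mu(s)\geq \nu(s)$, so the $\leq^{\text{st}}$-largest measure has the \emph{smallest} distribution function (cf.\ Remark~\ref{minimaandmaxima}).

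First I would establish tightness of $K$. Writing $C:=\sup_{\mu\in K}\int_\R \psi(|x|)\,d\mu(x)<\infty$ and using that $\psi$ is strictly increasing with $\psi(R)>\psi(0)\geq 0$ for $R>0$, Markov's inequality gives
\[
 \sup_{\mu\in K}\mu\big(\{|x|>R\}\big)=\sup_{\mu\in K}\mu\big(\{\psi(|x|)>\psi(R)\}\big)\leq \frac{C}{\psi(R)}\xrightarrow[R\to\infty]{}0 .
\]
This uniform bound is the only point where the hypothesis is used, and it yields the two nontrivial limits $F(R)\geq 1-C/\psi(R)\to 1$ as $R\to\infty$ and $G(-R)\leq C/\psi(R)\to 0$ as $R\to\infty$, using $(R,\infty)\subseteq\{|x|>R\}$ and $\{x\leq -R\}\subseteq\{\psi(|x|)\geq \psi(R)\}$. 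The remaining two limits are immediate: $F(s)\leq \mu(s)\to 0$ as $s\to-\infty$ and $G(s)\geq \mu(s)\to 1$ as $s\to+\infty$ for any fixed $\mu\in K$. Hence $F$ and $G$ are nondecreasing maps $\R\to[0,1]$ with $\lim_{s\to-\infty}=0$ and $\lim_{s\to+\infty}=1$.

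To obtain genuine right-continuous distribution functions I would pass to the upper semicontinuous envelopes of Remark~\ref{minimaandmaxima}(c), setting $\mu^{\rm Max}(s):=\inf_{\delta>0}F(s+\delta)$ and $\mu^{\rm Min}(s):=\inf_{\delta>0}G(s+\delta)$. Each envelope is nondecreasing and right-continuous, and inherits the limits $0$ and $1$ of its base function (since $H(s)\leq H^*(s)\leq H(s+\varepsilon)$), so $\mu^{\rm Max},\mu^{\rm Min}\in\mathcal P(\R)$. For the dominance relations, fix $\mu\in K$ and $s\in\R$; by right-continuity of the distribution function $\mu$,
\[
 \mu^{\rm Max}(s)=\inf_{\delta>0}F(s+\delta)\leq \inf_{\delta>0}\mu(s+\delta)=\mu(s),
\]
which is exactly $\mu\leq^{\text{st}}\mu^{\rm Max}$; and since $G$ is nondecreasing, $\mu^{\rm Min}(s)=\inf_{\delta>0}G(s+\delta)\geq G(s)\geq \mu(s)$, i.e.\ $\mu^{\rm Min}\leq^{\text{st}}\mu$. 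The main — indeed the only substantial — obstacle is establishing $F(+\infty)=1$ and $G(-\infty)=0$: without the uniform integrability bound the pointwise infimum of the distribution functions could stabilise at a value strictly below $1$, so that $\mu^{\rm Max}$ would be merely a sub-probability measure. Everything else is the routine bookkeeping of lattice operations on distribution functions collected in Remark~\ref{minimaandmaxima}.
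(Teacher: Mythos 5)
Your proof is correct, but it follows a genuinely different route from the paper's. The paper writes down explicit closed-form bounding distribution functions directly from the Markov bound, namely $\mu^{\rm Min}(s)=\frac{C}{\psi(-s)}\wedge 1$ and $\mu^{\rm Max}(s)=\big(1-\frac{C}{\psi(s)}\big)\vee 0$ (after extending $\psi$ by $\psi(0)$ on $(-\infty,0)$), and then verifies $\mu^{\rm Min}(s)\geq \mu(s)\geq \mu^{\rm Max}(s)$ by the same Chebyshev-type estimates you use; there is no envelope step because these functions are continuous and monotone by inspection. You instead take the pointwise extremal envelopes $F=\inf_{\mu\in K}\mu$, $G=\sup_{\mu\in K}\mu$ of the family itself, use Markov only to rule out escape of mass (i.e.\ to get $F(+\infty)=1$, $G(-\infty)=0$), and repair right-continuity via the usc envelope of Remark~\ref{minimaandmaxima}(c). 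Your construction buys the \emph{tightest possible} bounds — in fact essentially the supremum and infimum of $K$ in $(\mathcal P(\R),\leq^{\rm st})$ — and cleanly isolates the role of the integrability hypothesis as pure tightness; one small simplification you could add is that the envelope is superfluous on the $F$ side, since a pointwise infimum of upper semicontinuous (i.e.\ right-continuous nondecreasing) functions is again usc, so $F^*=F$ automatically, whereas it is genuinely needed for $G$ (e.g.\ $K=\{\delta_{1/n}\}_{n}$). What the paper's cruder explicit formulas buy is that their concrete form \eqref{muminmax} is exploited downstream: the proof of Lemma~\ref{remintervalui} computes the uniform-integrability bound by substituting $\psi(s)=C/(1-\mu^{\rm Max}(s))$ and $\psi(-s)=C/\mu^{\rm Min}(s)$ and changing variables, which your implicit envelopes would not permit as directly (though, since your interval is contained in the paper's, the u.i.\ conclusion would still follow).
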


\begin{proof}
 We extend $\psi$ to $(-\infty,0)$ by $\psi(s):=\psi(0)$ for $s<0$. Moreover, let $C\geq \psi(0)$ with
 \[
 \sup_{\mu\in K}\int_\R \psi(|x|) d\mu(x) \leq C.
 \]
 Then, we define $\mu^{\rm Min},\mu^{\rm Max}\colon \R\to [0,1]$ by
 \begin{equation}\label{muminmax}
  \mu^{\rm Min}(s):=\frac{C}{\psi(-s)}\wedge 1 \quad \text{and} \quad \mu^{\rm Max}(s):=\bigg(1-\frac{C}{\psi(s)}\bigg)\vee 0
 \end{equation}
 for all $s\in \R$. Notice that $\mu^{\rm Min}(s)= 1$ for $s\geq 0$ and $\mu^{\rm Max}=0$ for $s\leq 0$ since $\psi(0)\leq C$. Since $\psi(s)\to \infty$ as $s\to \infty$, it follows that $\lim_{s\to -\infty}\mu^{\rm Min}(s)=0$ and $\lim_{s\to \infty} \mu^{\rm Max}(s)=1$. Moreover, $\mu^{\rm Min}$ and $\mu^{\rm Min}$ are nondecreasing and (right) continuous, which shows that $\mu^{\rm Min},\mu^{\rm Max}\in \mathcal P(\R)$. Now, let $\mu\in K$. Then, recalling that $\psi$ is nondecreasing, one has
 \[
  1-\mu(s)\leq \frac{1}{\psi(s)}\int_s^\infty\psi(|x|) d\mu(x)\leq \frac{1}{\psi(s)}\int_\R \psi(|x|) d\mu(x)\leq \frac{C}{\psi(s)}= 1-\mu^{\rm Max}(s)
 \]
 for all $s\in \R$ with $\psi(s)>C$. Since $\mu^{\rm Max}(s)=0$ for all $s\in \R$ with $\psi(s)\leq C$, it follows that $\mu\leq \mu^{\rm Max}$. On the other hand,
  \[
  \mu(s)\leq \frac{1}{\psi(-s)}\int_{-\infty}^s\psi(|x|) d\mu(x)\leq \frac{1}{\psi(-s)}\int_\R\psi(|x|) d\mu(x)\leq \frac{C}{\psi(-s)}=\mu^{\rm Min}(s)
 \]
 for all $s\in \R$ with $\psi(-s)>C$. Since $\mu^{\rm Min}(s)=1$ for all $s\in \R$ with $\psi(-s)\leq C$, it follows that $\mu\geq \mu^{\rm Min}$.
\end{proof}

\begin{lemma}\label{remintervalui}
 Let $K\subset \mathcal P(\R)$ and $\psi\colon [0,\infty)\to [0,\infty)$ be continuous and strictly increasing with $\psi(s)\to \infty$ as $s\to \infty$ and
 \[
  \sup_{\mu\in M}\int_\R \psi(|x|) d\mu(x)<\infty.
 \]
 Further, let $\mu^{\rm Min}$ and $\mu^{\rm Max}$ be given by \eqref{muminmax} and $0\leq \alpha<1$. Then, the map $ x\mapsto \psi(|x|)^\alpha$ is u.i for $[\mu^{\rm Min},\mu^{\rm Max}]$, i.e.
 \[
  \sup_{\mu\in [\mu^{\rm Min},\mu^{\rm Max}]}\int_\R 1_{(M,\infty)}(|x|)\cdot \psi(|x|)^\alpha d\mu(x)\to 0\quad \text{as } M\to \infty.
 \]
\end{lemma}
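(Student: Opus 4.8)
The plan is to reduce the uniform statement over the whole order interval $[\mu^{\rm Min},\mu^{\rm Max}]$ to two integrability statements for the extreme measures $\mu^{\rm Min}$ and $\mu^{\rm Max}$, and then to carry out a tail estimate that exploits the explicit form \eqref{muminmax} together with the condition $\alpha<1$. First I would fix $M>0$ and split the integrand by the sign of $x$: since $\mathbf{1}_{(M,\infty)}(|x|)=\mathbf{1}_{(M,\infty)}(x)+\mathbf{1}_{(-\infty,-M)}(x)$, set
$$\varphi_+(x):=\mathbf{1}_{(M,\infty)}(x)\,\psi(x)^\alpha,\qquad \varphi_-(x):=\mathbf{1}_{(-\infty,-M)}(x)\,\psi(-x)^\alpha,$$
so that $\mathbf{1}_{(M,\infty)}(|x|)\psi(|x|)^\alpha=\varphi_++\varphi_-$. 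Because $\psi$ is nonnegative and nondecreasing, $\varphi_+$ is nondecreasing on $\mathbb{R}$ (it vanishes on $(-\infty,M]$ and coincides with the increasing function $\psi(x)^\alpha$ beyond $M$), while $\varphi_-$ is nonincreasing on $\mathbb{R}$. For any $\mu\in[\mu^{\rm Min},\mu^{\rm Max}]$ we have $\mu^{\rm Min}\leq^{\rm st}\mu\leq^{\rm st}\mu^{\rm Max}$, so applying the characterization \eqref{charact.stoch.dominance} to the increasing functions $\varphi_+$ and $-\varphi_-$ (a routine truncation at level $n$ followed by monotone convergence removes any a priori finiteness concern, once the extreme integrals below are known to be finite) yields $\langle\varphi_+,\mu\rangle\leq\langle\varphi_+,\mu^{\rm Max}\rangle$ and $\langle\varphi_-,\mu\rangle\leq\langle\varphi_-,\mu^{\rm Min}\rangle$. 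Consequently
$$\sup_{\mu\in[\mu^{\rm Min},\mu^{\rm Max}]}\int_{\mathbb{R}}\mathbf{1}_{(M,\infty)}(|x|)\psi(|x|)^\alpha\,d\mu(x)\leq\int_M^\infty\psi(x)^\alpha\,d\mu^{\rm Max}(x)+\int_{-\infty}^{-M}\psi(-x)^\alpha\,d\mu^{\rm Min}(x),$$
and it remains to show that both terms on the right tend to $0$ as $M\to\infty$.

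The crux is to prove that $x\mapsto\psi(|x|)^\alpha$ is integrable against $\mu^{\rm Max}$ and against $\mu^{\rm Min}$; the tails then vanish as $M\to\infty$ by dominated convergence. I would establish this through the layer-cake formula using the explicit tails of \eqref{muminmax}. Assume $\alpha\in(0,1)$, the case $\alpha=0$ being immediate since it reduces to the tails $\mu^{\rm Max}(\{x>M\})$ and $\mu^{\rm Min}(\{x<-M\})$, which are equal to $C/\psi(M)\to0$. The measure $\mu^{\rm Max}$ is supported on $[0,\infty)$ and satisfies $1-\mu^{\rm Max}(s)=C/\psi(s)$ whenever $\psi(s)>C$; by strict monotonicity of $\psi$ one gets, for $t>C^\alpha$,
$$\mu^{\rm Max}\big(\{x:\psi(x)^\alpha>t\}\big)=1-\mu^{\rm Max}\big(\psi^{-1}(t^{1/\alpha})\big)=\frac{C}{\psi(\psi^{-1}(t^{1/\alpha}))}=\frac{C}{t^{1/\alpha}},$$
so that
$$\int_{\mathbb{R}}\psi(x)^\alpha\,d\mu^{\rm Max}(x)=\int_0^\infty\mu^{\rm Max}\big(\{x:\psi(x)^\alpha>t\}\big)\,dt\leq C^\alpha+C\int_{C^\alpha}^\infty t^{-1/\alpha}\,dt<\infty,$$
where the last integral converges precisely because $1/\alpha>1$, i.e.\ because $\alpha<1$. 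The identical computation applied to $\mu^{\rm Min}$, whose left tail is $\mu^{\rm Min}(s)=C/\psi(-s)$ for $\psi(-s)>C$, gives $\int_{\mathbb{R}}\psi(-x)^\alpha\,d\mu^{\rm Min}(x)<\infty$.

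Having established finiteness of these two integrals, the tail contributions $\int_M^\infty\psi(x)^\alpha\,d\mu^{\rm Max}(x)$ and $\int_{-\infty}^{-M}\psi(-x)^\alpha\,d\mu^{\rm Min}(x)$ converge to $0$ as $M\to\infty$; combined with the displayed dominance bound this proves the claimed uniform integrability. I expect the main obstacle to be precisely the borderline nature of the hypothesis: the tail of $\mu^{\rm Max}$ decays only like $1/\psi$, so $\psi^\beta$ is $\mu^{\rm Max}$-integrable if and only if $\beta<1$. Thus the restriction $\alpha<1$ is sharp and must be invoked exactly at the step where the convergence of $\int^\infty t^{-1/\alpha}\,dt$ is needed; any argument that does not isolate this exponent will fail.
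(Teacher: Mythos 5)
Your proof is correct, but it takes a genuinely different route from the paper's. The paper fixes an intermediate exponent $\beta\in(\alpha,1)$, proves the uniform moment bound $\sup_{\mu\in[\mu^{\rm Min},\mu^{\rm Max}]}\int_\R \psi(|x|)^\beta\,d\mu(x)<\infty$ --- computing the integrals against the extreme measures by the substitution $u=\mu^{\rm Max}(s)$ (resp.\ $u=\mu^{\rm Min}(s)$), which turns them into $\int_0^1 \bigl(C/(1-u)\bigr)^\beta du$ and $\int_0^1 \bigl(C/u\bigr)^\beta du$, finite precisely because $\beta<1$ --- and then invokes the De La Vall\'ee-Poussin criterion to pass from the uniform $\beta$-moment bound to uniform integrability of $\psi(|\cdot|)^\alpha$. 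You instead work with the exponent $\alpha$ itself: you reduce the tail integrals over the whole order interval to tail integrals against $\mu^{\rm Max}$ and $\mu^{\rm Min}$ via the characterization \eqref{charact.stoch.dominance} applied to the truncated monotone functions $\varphi_\pm$, prove $\int\psi^\alpha\,d\mu^{\rm Max}<\infty$ and $\int\psi(-\cdot)^\alpha\,d\mu^{\rm Min}<\infty$ by the layer-cake formula (which is the paper's substitution in different clothing: both exploit that the extreme tails decay exactly like $C/\psi$), and finish by dominated convergence. What the paper's route buys is brevity: once the $\beta$-moment bound is in place, De La Vall\'ee-Poussin does all the tail-uniformity work, and the dominance reduction is only needed for full integrals rather than for every truncated tail function. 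What your route buys is that it is more elementary and self-contained (no appeal to De La Vall\'ee-Poussin), it yields integrability of $\psi^\alpha$ itself against the extreme measures with the explicit tail rate $C/t^{1/\alpha}$, and it makes transparent why the restriction $\alpha<1$ is sharp --- a point the paper leaves implicit. Your treatment of the finiteness caveat in \eqref{charact.stoch.dominance} by truncation and monotone convergence is the right fix and is needed for the argument to be complete.
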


\begin{proof}
 Let $\beta \in (\alpha,1)$. Then, by \eqref{muminmax},
 \begin{equation}\label{lemA3}
  \psi(s)=\frac{C}{1-\mu^{\rm Max}(s)}\quad \text{for }s\geq \psi^{-1}(C)\quad \text{and}\quad \psi(-s)=\frac{C}{\mu^{\rm Min}(s)}\quad \text{for }s\leq -\psi^{-1}(C)
 \end{equation}
Recall $\psi^{-1}(C)=\big(\mu^{\rm Max}\big)^{-1}(0)=\big(\mu^{\rm Max}\big)^{-1}(1)$. This together with \eqref{lemA3} implies that
 \[
  \int_0^\infty \psi(s)^\beta d\mu^{\rm Max}(s)=\int_{\psi^{-1}(0)}^\infty\bigg(\frac{C}{1-\mu^{\rm Max}(s)}\bigg)^\beta d\mu^{\rm Max}(s)=\int_0^1 \bigg(\frac{C}{1-u}\bigg)^\beta du<\infty
 \]
 and
 \[
  \int_{-\infty}^0 \psi(-s)^\beta d\mu^{\rm Min}(s)=\int_{-\infty}^{-\psi^{-1}(C)} \bigg(\frac{C}{\mu^{\rm Min}(s)}\bigg)^\beta d\mu^{\rm Min}(s)=\int_0^1 \bigg(\frac{C}{u}\bigg)^\beta du<\infty,
 \]
 where, in both equalities, we used the transformation lemma. It follows that
 \[
  \sup_{\mu\in [\mu^{\rm Min},\mu^{\rm Max}]}\int_\R \psi(|x|)^\beta d\mu(x)\leq \int_0^\infty \psi(s)^\beta d\mu^{\rm Max}(s)+\int_{-\infty}^0 \psi(-s)^\beta d\mu^{\rm Min}(s).
 \]
 By the De La Vall\'{e}e-Poussin Lemma, it follows that $|x|\mapsto \psi(|x|)^\alpha$ is u.i. for $[\mu^{\rm Min},\mu^{\rm Max}]$. In particular, if $\psi(s)\geq s^p$ for some $p\in (0,\infty)$, then, $x\mapsto |x|^q$ is u.i. for $[\mu^{\rm Min},\mu^{\rm Max}]$ for all $q\in (0,p)$.
 
\end{proof}


We now turn our focus on measureable flows of probability measures. The following proposition is the starting point in order to apply Tarski's fixed point theorem in the proof of the existence of mean field game solutions. We start by building up the setup. Let $\underline\mu,\overline\mu\in \mathcal P(\R)$ with $\underline\mu\leq^{\text{st}} \overline\mu$ and $(S,\mathcal S,\pi)$ be a finite measurable space. We denote by $\mathcal B$ the Borel $\sigma$-algebra on $\mathcal P(\R)$ generated by the weak topology. We denote the lattice of all equivalence classes of $\mathcal S$-$\mathcal B$-measurable functions $S\to \mathcal [\underline \mu,\overline \mu]$ by $L=L^0(S,\pi;[\underline \mu,\overline \mu])$. An arbitrary element $\mu$ of $L$ will be denoted in the form $\mu=(\mu_t)_{t\in S}$. On $L$ we consider the order relation $\leq^{\text{\tiny{$L$}}}$ given by $\mu \leq^{\text{\tiny{$L$}}} \nu$ if and only if $\mu_t \leq^{\text{st}} \nu_t$ for $\pi$-a.a.\ $t\in S$. The following proposition can be found in a more general form in \cite{nendel}. However, for the sake of a self-contained exposition, we provide a short proof below.

\begin{proposition}\label{Append.Completeness}
 The lattice $L$ is complete.
\end{proposition}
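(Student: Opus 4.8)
The plan is to reduce the completeness of $L$ to the classical theory of essential suprema in $L^0(S,\pi;[0,1])$, using throughout the identification of a measure with its distribution function. Recall from Remark \ref{minimaandmaxima} that, under $\leq^{\text{st}}$, the supremum of a bounded family in $\mathcal P(\R)$ is the pointwise infimum of the associated distribution functions, while the infimum is the usc-envelope of their pointwise supremum; since all measures here lie in $[\underline\mu,\overline\mu]$, every family is automatically bounded, so these pointwise-in-$s$ operations are well defined and stay in $[\underline\mu,\overline\mu]$. Hence the only genuine difficulty is \emph{measurability} in the time variable $t$: for an uncountable family $A\subset L$ the naive pointwise-in-$t$ supremum need not be $\mathcal S$-measurable. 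I would circumvent this exactly as in the construction of the essential supremum, by replacing the full family with a suitable countable one.

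For the supremum, fix $A\subset L$, which we may take nonempty (the empty set being trivial). For each $q\in\Q$ consider the scalar family $\{t\mapsto \mu_t(q):\mu\in A\}\subset L^0(S,\pi;[0,1])$ and set $h_q:=\essinf_{\mu\in A}\mu_\cdot(q)$; by the standard countable-attainment property of the essential infimum there is a countable $C_q\subset A$ with $h_q=\inf_{\mu\in C_q}\mu_\cdot(q)$ $\pi$-a.e. Put $C:=\bigcup_{q\in\Q}C_q=\{\mu^n:n\in\N\}$, still countable, and define $\nu=(\nu_t)_{t\in S}$ by letting $\nu_t$ be the measure with distribution function $s\mapsto\inf_{n\in\N}\mu^n_t(s)$. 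For each fixed $t$ this is a nondecreasing, right-continuous function with the correct limits at $\pm\infty$ (a countable infimum of distribution functions), so $\nu_t\in[\underline\mu,\overline\mu]$; and since $t\mapsto\nu_t(q)=\inf_n\mu^n_t(q)$ is measurable for every rational $q$, the flow $\nu$ is $\mathcal S$-$\mathcal B$-measurable, i.e.\ $\nu\in L$.

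It then remains to verify $\nu=\sup A$. For the upper-bound property, fix $\lambda\in A$: for each $q$ one has $\nu_t(q)=\inf_n\mu^n_t(q)\leq\inf_{\mu\in C_q}\mu_t(q)=h_q(t)\leq\lambda_t(q)$ for $\pi$-a.a.\ $t$; intersecting the countably many full-measure sets and invoking right-continuity yields $\nu_t(s)\leq\lambda_t(s)$ for all $s$, $\pi$-a.e., that is $\lambda_t\leq^{\text{st}}\nu_t$, so $\lambda\leq^{\text{\tiny{$L$}}}\nu$. For minimality, if $\rho\in L$ is any upper bound of $A$, then $\rho\geq^{\text{\tiny{$L$}}}\mu^n$ for every $n$, whence $\rho_t(q)\leq\mu^n_t(q)$ $\pi$-a.e.; taking the infimum over $n$ gives $\rho_t(q)\leq\nu_t(q)$ for all $q$, $\pi$-a.e., i.e.\ $\nu\leq^{\text{\tiny{$L$}}}\rho$.

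The greatest lower bound is obtained by the dual construction, replacing $\essinf$ by $\esssup$ and defining $\kappa_t$ through the usc-envelope $\kappa_t(s)=\inf_{q>s,\,q\in\Q}\sup_n\mu^n_t(q)$ of the pointwise supremum (cf.\ Remark \ref{minimaandmaxima}(c)); this is again measurable in $t$ and restores right-continuity in $s$, and the verification mirrors the one above, with right-continuity of the competitors used to pass between values at $s$ and at rationals $q>s$. The step requiring the most care is precisely this reconciliation of the essential (i.e.\ $\pi$-a.e.\ in $t$) operations with the pointwise-in-$s$ lattice operations on distribution functions; handling it cleanly is exactly why one fixes the countable set of rationals first, passes the essential infima/suprema through a single countable subfamily, and invokes right-continuity only at the very end.
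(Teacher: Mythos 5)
Your proof is correct, but it follows a genuinely different mechanism than the paper's. You reduce completeness to the classical scalar essential-infimum theorem: for each rational $q$ you take the essential infimum of the evaluations $t\mapsto\mu_t(q)$, invoke its countable attainment, collect the countably many witnessing subfamilies into a single countable family $C$, and rebuild the supremum flow as the pointwise infimum of the distribution functions of $C$, using right-continuity to pass from rational to all real arguments (and the usc-envelope, as in Remark \ref{minimaandmaxima}(c), for the infimum). The paper instead works with the measure-valued flow directly: it considers all countable subfamilies $\Psi\subset M$, scalarizes via the single functional $\mu\mapsto\int_S\int_\R\arctan(x)\,d\mu_t(x)\,d\pi(t)$, picks a countable family $\Psi^*$ whose supremum maximizes this quantity, and then uses strict monotonicity of $\arctan$ --- a strictly $\leq^{\text{st}}$-monotone functional --- to conclude that adjoining any further element of $M$ cannot change the supremum, so $\mu^{\Psi^*}$ dominates every element of $M$. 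Both are countable-reduction arguments resting on Remark \ref{minimaandmaxima} for the lattice operations on distribution functions; yours is more modular (it quotes a standard theorem and makes the interplay between the $\pi$-a.e.-in-$t$ and the pointwise-in-$s$ operations fully explicit), while the paper's is self-contained and avoids coordinatewise bookkeeping, since one strictly increasing test function suffices to detect equality under stochastic dominance. The one step in your argument that deserves an explicit justification is the measurability claim: you need that the Borel $\sigma$-algebra $\mathcal B$ of the weak topology on $\mathcal P(\R)$ is generated by the countably many evaluation maps $\mu\mapsto\mu((-\infty,q])$, $q\in\Q$. This is a standard fact (and a cognate of it is implicitly used by the paper when asserting that countable suprema of measurable flows are measurable), but as stated your proof treats it as immediate.
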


\begin{proof}
 Let $M\subset L$ be a nonempty subset of $L$. Then, for every countable set $\Psi\subset M$, we denote by $\mu^\Psi:=\sup_{\mu\in \Psi}f\in L$. Notice that the $\mathcal S$-$\mathcal B$-measurability of $\mu^\Psi$ follows from Remark \ref{minimaandmaxima}. Let
 \[
  c:=\sup\bigg\{\int_S \int_\R\arctan(x) d\mu_t^\Psi(x) d\pi(t)\, \bigg|\, \Psi\subset M \text{ countable}\bigg\}.
 \]
 Moreover, the map $t\mapsto \int_\R\arctan(x)\, d\mu_t$ is measurable for every $\mu\in L$ since $\arctan\in C_b(\R)$ induces a continuous (w.r.t. to the weak topology) linear functional $\mathcal P(\R)\to \R$. By definition of the constant $c$, there exists a sequence $(\Psi^n)_{n\in \N}$ of countable subsets of $M$ with
 \[
  \int_S \int_\R\arctan(x) d\mu_t^{\Psi^n}(x) d\pi(t)\to c\quad \text{as }n\to \infty.
 \]
 Let $\Psi^*:=\bigcup_{n\in \N}\Psi^n$ and $\mu^*:=\mu^{\Psi^*}$. We now show that $\mu^*\geq \mu$ $\pi$-a.s. for all $\mu\in M$. In order to see this, fix some $\mu\in M$ and let $\Psi':=\Psi^*\cup\{\mu\}$. Then, it follows that
 \[
  c=\int_S \int_\R\arctan(x) d\mu_t^*(x) d\pi(t)\leq \int_S \int_\R\arctan(x) d\mu_t^{\Psi'}(x) d\pi(t)\leq c. 
 \]
 Since $\arctan$ is strictly increasing it follows that $\mu^{\Psi'}=\mu^*$, i.e. $\mu\leq \mu^*$. Moreover, for any upper bound $\mu\in L$ of $M$ it is easily seen that $\mu\geq \mu^*$. Altogether, we have shown that $\mu^*=\sup M$. In a similar way, one shows that $M$ has an infimum.
\end{proof}

\begin{remark}\label{append.remark.increasing.sequence.weakly.converges}
Let $M\subset L$ be nonempty. Then, we say that $M$ is \textit{directed upwards} or \textit{directed downwards} if for all $\mu,\nu\in M$ there exists some $\eta\in M$ with $\mu\vee \nu\leq \eta$ or $\mu\wedge \nu \geq \eta$, respectively.
\begin{enumerate}
 \item[a)] The proof of the previous theorem shows that if $M$ is directed upwards, then there exists a nondecreasing sequence $(\mu^n)_{n\in \N}\subset M$ with $\mu^n\to \sup M$ weakly $\pi$-a.e.~as $n\to \infty$. The analogous statement holds for the infimum if $M$ is directed downwards. In particular, if $( \mu^n )_{n \in \mathbb{N}}$ is a nondecreasing or nonincreasing sequence in $L$, then it converges weakly $\pi$-a.e.~to its least upper bound or greatest lower bound, respectively.
 \item[b)] Assume that $S$ is a singleton with $\pi(S)>0$. Then, the previous remark implies the following: For any nonempty set $K\subset \mathcal P(\R)$ that is bounded above and directed upwards, its supremum exists and can be weakly approximated by a monotone sequence. An analogous statement holds for the infimum if the set $K$ is bounded below and directed downwards.
\end{enumerate}
\end{remark}

\smallskip
\section*{Acknowledgements}
Financial support through the German Research Foundation via CRC 1283 is gratefully acknowledged.


\end{document}